\newcommand{\ph}{\operatorname{PH}}
\newcommand{\diff}{\operatorname{Diff}}
\newcommand{\nuh}{\operatorname{Nuh}}
\renewcommand{\le}{\operatorname{LE}}
\newcommand{\N}{\mathbb N}
\newcommand{\M}{\mathcal M}
\newcommand{\Z}{\mathbb Z}
\newcommand{\R}{\mathbb R}
\newcommand{\merg}{{\mathcal M}^{erg}}
\newcommand{\eps}{\varepsilon}
\renewcommand{\P}{\mathcal P}
\newcommand{\supp}{\operatorname{supp}}
\newtheorem{theorem}{Theorem}[section]
\newtheorem{conjecture}[theorem]{Conjecture}
\newtheorem{definition}[theorem]{Definition}
\newtheorem{proposition}[theorem]{Proposition}
\newtheorem{question}[theorem]{Question}
\title{Some advances on generic properties of the Oseledets splitting}
\author{Jana Rodriguez Hertz}
\dedicatory{\begin{flushright}Dedicated to the memory of Ricardo Ma\~n\'e, \\ on occasion of the 15th anniversary of his death.\end{flushright}}
\begin{document}
\begin{abstract}
In his foundational paper \cite{manhe1983} , Ma\~n\'e suggested that some aspects of the Oseledets splitting could be improved if one worked under $C^1$-generic conditions. He announced some powerful theorems, and suggested some lines to follow. Here we survey the state of the art and some recent advances in these directions.
\end{abstract}
\maketitle
%-------------------------------------------------------------------------------
\section{Introduction}
%-------------------------------------------------------------------------------
In his foundational paper \cite{manhe1983} , Ma\~n\'e suggested that some aspects of the Oseledets splitting could be improved if one worked under $C^1$-generic conditions. He announced some powerful theorems, and suggested some lines to follow. Here we survey some recent advances in these directions. \par
Given a $C^1$ diffeomorphism $f\in\diff^1(M)$, a point $x\in M$ and a tangent vector $v\in T_xM\setminus\{0\}$, the {\em Lyapunov exponent} of $x$ associated to $v$ is given by:
\begin{equation}\label{equation Lyapunov exponent}
\lambda(x,v)=\lim_{n\to\pm\infty}\frac{1}{n}\log \|Df^n(x)v\|
\end{equation}
If this limit exists, we can roughly say that $\lambda(x,v)$ measures the asymptotic exponential expansion of the norm of $v$ under the action of the derivative $Df^n(x)$. A point $x$ is called {\em regular} if there exists an invariant splitting $T_xM=E_1(x)\oplus\dots\oplus E_{k(x)}(x)$ such that
\begin{equation}\label{oseledets splitting}
\lambda(x,v_i)=\lambda_i(x)\qquad\mbox{for all}\quad v_i\in E_i(x)\setminus\{0\}\qquad i=1,\dots,k(x)
\end{equation}
The splitting above is unique and is called the {\em Oseledets splitting}. In the sixties, Oseledets proved \cite{oseledec} that for all $C^1$ diffeomorphism $f$, there is a total probability set $R(f)$ of regular points. Namely, for all $f$-invariant measure $\mu$ we have $\mu(R(f))=1$.\par
Let us note that the Oseledets splitting, as well as the Lyapunov exponents, depend only measurably on the base point. This is natural, on the other hand, since Oseledets theorem holds for {\em any} $C^1$ diffeomorphism. In \cite{manhe1983}, Ma\~n\'e suggested that one could improve the regularity of the Oseledets splitting by working under $C^1$ generic conditions. \par
This can be done in different ways. One approach consists in looking at the Oseledets theorem for generic invariant measures for generic diffeomorphisms. This is dealt with next in Subsection \ref{subsection generic} and Section \ref{section.generic}. Another approach, also mentioned in \cite{manhe1983}, consists in studying Oseledets theorem with respect to a measure coming from a symplectic form $\omega$, and considering generic diffeomorphisms preserving this form. We will extend this concept in Subsection \ref{subsection.symplectic} and Section \ref{section.symplectic}.\par
Finally, one can deal with the improvement of Oseledets splitting properties for $C^1$-generic {\em volume preserving} diffeomorphisms. This approach was only suggested in \cite{manhe1983}, though many advances have been made in this line in the recent years. We focus on this point of view in Subsection \ref{subsection.conservative} and Section \ref{section.conservative}.\par

Let us note that in the conservative setting one aims at a $C^1$-generic dichotomy: non-uniform hyperbolicity, global domination and ergodicity vs. zero Lyapunov exponents almost everywhere. This was stated explicitly as a conjecture by Avila and Bochi in \cite{avila_bochi2009}, based on results in \cite{manhe1983,bochi2002} and \cite{jrhertz2010}. Although many partial results have been obtained, this has not been achieved (or disproved) yet. See \ref{subsection.conservative} and Section \ref{section.conservative} and references therein.\par
On the other hand, in the symplectic setting one has non-removable zero Lyapunov exponents in the interior of partially hyperbolic diffeomorphisms, so the above dichotomy is not expectable. However, one can aim at the $C^1$-generic dichotomy: partially hyperbolicity and ergodicity vs. zero Lyapunov exponents almost everywhere (see Subsection \ref{subsection.symplectic}, Section \ref{section.symplectic} and references therein). \par

%---------------------------------------------------------------------------
\subsection{Generic diffeomorphisms} \label{subsection generic}
%---------------------------------------------------------------------------
Let us state in this subsection the lines suggested by Ma\~n\'e in \cite{manhe1983}, with  respect to the first approach. \par
Let us denote by $\M(M)$ the set of probabilities defined in $M$, endowed with the weak topology; and consider $\M(f)$ the subset of $f$-invariant probability measures. This is a compact metric space, and hence it is a Baire space. As usual we shall call a set {\em residual} if it contains a countable intersection of open and dense sets. Recall that in a Baire space residual sets are dense. We shall call a set {\em meager} if its complement is residual. Let us denote by $\merg(f)$ the subset of $\M(f)$ consisting in ergodic measures. This set is a $G_\delta$ subset of $\M(f)$ and therefore it is a Baire space too. We shall say that a property holds for a {\em generic} element in a Baire space if this property holds for a residual subset of the Baire space.\par
Given a measure $\mu\in \M(M)$, we shall denote by $\supp(\mu)$ the support of $\mu$, that is, the set of points such that all of its neighborhoods have $\mu$-positive measure. Recall that this is a compact set.
We have that:
\begin{theorem}\cite{manhe1983}\label{teo.oseledets.generico} For a generic $f\in\diff^1(M)$ and a generic ergodic measure $\mu\in\merg(f)$, the Oseledets splitting extends to a dominated splitting over $\supp(\mu)$.
\end{theorem}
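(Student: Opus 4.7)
The strategy is to combine upper semi-continuity of integrated Lyapunov exponents with a Bochi--Ma\~n\'e--Viana style perturbation argument. For each $k\in\{1,\dots,\dim M\}$, consider the functional
\[
\Lambda_k(\mu) := \lambda_1(\mu) + \cdots + \lambda_k(\mu) = \inf_{n\geq 1} \tfrac{1}{n}\int \log\|\wedge^k Df^n\|\,d\mu,
\]
where $\wedge^k$ denotes the $k$-th exterior power. By the Kingman subadditive ergodic theorem, $\Lambda_k$ is an infimum of continuous functionals on $\merg(f)$ and is therefore upper semi-continuous. Since $\merg(f)$ is a Baire space, each $\Lambda_k$ is continuous on a residual subset, and the common set of continuity points $G(f)\subset\merg(f)$ is residual. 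At any $\mu\in G(f)$, every individual Lyapunov exponent $\lambda_i$ depends continuously on $\mu$, so the whole Oseledets spectrum is stable at $\mu$.

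Next, I would show that for $f$ in a suitable residual subset $\mathcal{R}\subset\diff^1(M)$ and every $\mu\in G(f)$, the Oseledets splitting extends to a dominated splitting on $\supp(\mu)$. Arguing by contradiction, suppose adjacent Oseledets subbundles $E_i,E_{i+1}$ with $\lambda_i(\mu)>\lambda_{i+1}(\mu)$ fail to be dominated over $\supp(\mu)$. Then along long orbit segments some vector in $E_{i+1}$ expands comparably to a vector in $E_i$. Using Ma\~n\'e's ergodic closing lemma to approximate $\mu$ by periodic orbit measures (which is available for $f$ in a residual subset of $\diff^1(M)$), and applying Franks' lemma along such a long periodic segment, one constructs a $C^1$-small perturbation $\tilde f$ of $f$ that rotates $E_{i+1}$ into $E_i$ along the orbit. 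The resulting nearby invariant measure $\tilde\mu$ for $\tilde f$ has $\Lambda_i(\tilde f,\tilde\mu)$ strictly smaller than $\Lambda_i(f,\mu)$ by a definite amount, contradicting continuity of $\Lambda_i$ at $(f,\mu)$.

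To obtain genericity jointly in $f$, one intersects the residual subsets of $\diff^1(M)$ coming from Kupka--Smale, Ma\~n\'e's ergodic closing lemma, and the continuity of each $\Lambda_k$ viewed as a function on the total space of pairs $(f,\mu)$. Standard Baire category bookkeeping then yields a residual $\mathcal{R}\subset\diff^1(M)$ on which the argument of the previous paragraph applies uniformly to every $\mu\in G(f)$, proving the theorem.

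The main obstacle is the perturbation step: realizing a measurable rotation of adjacent Oseledets subbundles as an honest $C^1$-small perturbation of $f$. One must control simultaneously the $C^1$-norm of the perturbation and the resulting drop in $\Lambda_i$, balancing the length of the orbit segment over which the rotation is spread against the $C^1$-cost of each local tilt. This delicate length-versus-cost bookkeeping, together with the requirement that the perturbation be a genuine derivative cocycle (enforced via Franks' lemma and adapted charts), is the technical heart of the Bochi--Ma\~n\'e--Viana perturbation theory, and is precisely the innovation originally announced in \cite{manhe1983}.
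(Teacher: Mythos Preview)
Your ingredients---upper semi-continuity of the $\Lambda_k$ on $\merg(f)$, the Ergodic Closing Lemma, Franks-type perturbations along periodic orbits---are the right ones, and they are what underlies the result. The paper does not spell out a proof but points to Theorem~\ref{teo.aprox.period.manhe}: for generic $f$, periodic measures are dense in $\merg(f)$, and by \cite{abdenur_bonatti_crovisier2008} this approximation can be taken with converging Lyapunov exponents and supports.

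There is, however, a structural gap in your contradiction step. You choose $\mu$ to be a continuity point of $\Lambda_i$ \emph{on $\merg(f)$ for a fixed $f$}, then perturb $f$ to $\tilde f$ and exhibit $\tilde\mu\in\merg(\tilde f)$ with $\Lambda_i(\tilde f,\tilde\mu)$ small. But $\tilde\mu\notin\merg(f)$, so this says nothing about continuity of $\Lambda_i:\merg(f)\to\R$ at $\mu$. Your repair via a ``total space of pairs $(f,\mu)$'' is not justified: that fibered space (fiber $\merg(f)$ over $f$) has no evident Polish/Baire structure, and even granting one, a residual set of continuity points there does not decompose as ``residual $f$, and for each such $f$, residual $\mu\in\merg(f)$'' without a fibered Kuratowski--Ulam argument that you do not supply. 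This is exactly where the Bochi--Viana scheme---built for a \emph{single} reference measure $m$ preserved by every $f\in\diff^1_m(M)$---fails to transfer verbatim to the dissipative setting.

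The route the paper indicates avoids this by doing all Baire bookkeeping in $\diff^1(M)$ alone and never perturbing $f$ once it lies in the residual set. One restricts to a residual set of $f$ for which (a) the periodic approximation of Theorem~\ref{teo.aprox.period.manhe} holds with converging exponents and supports, and (b) a uniform Franks/Ma\~n\'e dichotomy holds over the periodic orbits of $f$: for each $\eps>0$ there is $\ell=\ell(\eps)$ such that every periodic orbit with $\lambda_i-\lambda_{i+1}>\eps$ already carries an $\ell$-dominated splitting at index $i$. Property~(b) is itself generic because its failure is destroyed by an arbitrarily small Franks perturbation collapsing $\lambda_i$ onto $\lambda_{i+1}$; the residual set is carved out in $\diff^1(M)$, not in a space of pairs. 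For such a fixed $f$ and for $\mu$ a continuity point of all $\Lambda_k$ in $\merg(f)$, the approximating periodic orbits inherit a uniform exponent gap at each index, hence a uniform domination constant by~(b), and the dominated splitting then passes to the Hausdorff limit $\supp(\mu)$.
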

This in particular implies that the Oseledets splitting varies continuously over $\mu$-almost every point in $\supp(\mu)$, and has bounded angles. See precise definition of dominated splitting in Definitions \ref{definition dominated splitting} and \ref{definition dominated splitting muchos}. Observe that when the Oseledets splitting is trivial, we have that the Lyapunov exponents are all zero $\mu$-almost everywhere.\par
We must warn the reader that, as Ma\~n\'e noticed in \cite{manhe.libro}, the set of regular points $R(f)$, over which the Oseledets splitting is defined, is in general meager. A precise statement and proof of this is given by Abdenur, Bonatti and Crovisier in \cite{abdenur_bonatti_crovisier2008}. See Section \ref{section.generic}.\par
However, the support of $\mu$ could be large from the topological point of view, and it could be even the whole manifold $M$. \newline\par
The theorem above is based on the generic denseness of periodic measures among ergodic ones. This result, also stated in \cite{manhe1983}, is based on the Ergodic Closing Lemma \cite{manhe1982} and the Birkhoff Theorem, and it is interesting in itself, since it is the base of many results in this subject. \par
Given a diffeomorphism $f\in \diff^1(M)$ and an $f$-periodic orbit $\gamma$, let us denote by $\mu_\gamma$ the measure equidistributed along $\gamma$, that is, if $m$ is the period of $\gamma$ and $x\in\gamma$, then:
\begin{equation}\label{periodic measure}
\mu_\gamma =\frac{1}{m}\sum_{j=0}^{m-1}\delta_{f^j(x)}
\end{equation}
any such measure will be called {\em periodic measure}. Let us denote by $\P er(f)\subset\merg(f)$ the set of periodic measures.
In \cite{manhe1983}, Ma\~n\'e proved that
\begin{theorem}\cite{manhe1983} \label{teo.aprox.period.manhe}For a generic $f\in \diff^1(M)$, the set of periodic measures $\P er(f)$ is dense in $\merg(f)$.
\end{theorem}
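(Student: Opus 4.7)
The plan is to combine Ma\~n\'e's Ergodic Closing Lemma with Birkhoff's ergodic theorem to produce, for every pair $(f,\mu)$ with $\mu\in\merg(f)$, an arbitrarily $C^1$-small perturbation having a periodic measure close to $\mu$; and then to convert this ``for every $f$, after a small perturbation'' statement into a ``for a generic $f$, with no further perturbation'' statement via a Baire category argument based on the semicontinuity of the closure of the set of periodic measures as a function of $f$.

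For the perturbation step, fix $f$, $\mu\in\merg(f)$ and $\varepsilon>0$, and pick $x$ in the full $\mu$-measure set where both Birkhoff's theorem (applied to a countable dense subfamily of $C(M)$) and Ma\~n\'e's well-closability property hold. The first forces the empirical measures $\mu_{x,n}:=\frac{1}{n}\sum_{j=0}^{n-1}\delta_{f^j(x)}$ to converge weakly to $\mu$, and the second, through the Ergodic Closing Lemma \cite{manhe1982}, provides for all sufficiently large $n$ a $C^1$-small perturbation $g$ of $f$ together with a periodic point $p$ of $g$ whose orbit shadows $x,f(x),\dots,f^{n-1}(x)$ with small error. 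Choosing $n$ large enough that $d(\mu_{x,n},\mu)<\varepsilon/2$ and the closing error small enough that $d(\mu_\gamma,\mu_{x,n})<\varepsilon/2$ yields the desired $\mu_\gamma$ with $d(\mu_\gamma,\mu)<\varepsilon$. A final arbitrarily small perturbation places $g$ in the Kupka-Smale residual set $\mathcal{KS}$ and makes $\gamma$ hyperbolic.

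For the Baire step, consider the set-valued map $K:\diff^1(M)\to\mathcal K(\M(M))$, $K(f):=\overline{\P er(f)}$, where $\mathcal K(\M(M))$ denotes compact subsets of $\M(M)$ equipped with the Hausdorff metric. Its restriction $K|_{\mathcal{KS}}$ is lower semicontinuous, since every hyperbolic periodic orbit of $f\in\mathcal{KS}$ persists continuously under $C^1$-perturbations; a classical theorem on semicontinuous maps into hyperspaces then provides a residual subset $\mathcal R\subseteq\mathcal{KS}$ of continuity points. For $f\in\mathcal R$, I would argue by contradiction: if some $\mu\in\merg(f)$ satisfied $d(\mu,K(f))>\eta>0$, applying the perturbation lemma with $\varepsilon=\eta/4$ and then moving back into $\mathcal{KS}$ would produce arbitrarily $C^1$-close $g$ admitting a hyperbolic periodic measure $\mu_\gamma\in K(g)$ with $d(\mu_\gamma,\mu)<\eta/4$, and hence $d(\mu_\gamma,K(f))>3\eta/4$, contradicting the continuity estimate $d_H(K(g),K(f))<\eta/2$ available for $g$ close enough to $f$.

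The main obstacle is the perturbation lemma: one must simultaneously control the $C^1$-size of the perturbation, the length $n$ of the shadowed orbit segment (so that $\mu_{x,n}$ is weakly close to $\mu$), and the quality of the shadowing (so that $\mu_\gamma$ is weakly close to $\mu_{x,n}$). The crux is selecting a single base point $x$ which is simultaneously generic for Birkhoff against a countable dense family in $C(M)$ and well-closable in the sense of \cite{manhe1982}, and then matching the closing scale to the Birkhoff scale. By contrast, the semicontinuity/continuity-point packaging of $K$ that delivers the genericity statement is essentially formal once the perturbation lemma is in hand.
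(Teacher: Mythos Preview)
Your proposal is correct and follows precisely the approach the paper indicates: the survey does not give a detailed proof, but states that the theorem ``is based on the Ergodic Closing Lemma \cite{manhe1982} and the Birkhoff Theorem,'' which are exactly the two ingredients you combine in your perturbation step. The Baire-category packaging via lower semicontinuity of $f\mapsto\overline{\P er(f)}$ on the Kupka--Smale residual set is the standard way to pass from the perturbative statement to the generic one, and although the paper does not spell it out, it is implicit in the attribution to \cite{manhe1983}; one minor caveat is that the Ergodic Closing Lemma guarantees closing for \emph{some} arbitrarily large $n$ rather than ``for all sufficiently large $n$,'' but this is already enough since Birkhoff gives $\mu_{x,n}\to\mu$ along the full sequence.
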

In fact, as a direct consequence of the Ergodic Closing Lemma \cite{manhe1982}, we have that the support of the periodic measures also approach the support of every ergodic measure in the Hausdorff topology. See definitions in Section \ref{section.preliminaries}. This result has been later improved by Abdenur, Bonatti and Crovisier in \cite{abdenur_bonatti_crovisier2008}, see Section \ref{section.generic} for a precise statement.\par
With respect to Theorem \ref{teo.oseledets.generico}, let us observe that there are obviously some limitations when talking about {\em generic} ergodic measures of generic diffeomorphisms. One obvious limitation is, as Ma\~n\'e himself noticed, that the metric entropy with respect to a generic invariant measure, is zero. This implies that, the generic set of $\merg(f)$ for which Theorem \ref{teo.oseledets.generico}, may fail to reflect the true dynamics of $f$. See \cite{abdenur_bonatti_crovisier2008} and Section \ref{section.generic} for more details. \par
Another possible limitation is that, as Avila and Bochi showed in \cite{avila_bochi2006}, $C^1$-generic diffeomorphisms do not admit absolutely continuous invariant measures, so it is not expectable, for the set of diffeomorphisms described in Theorem \ref{teo.oseledets.generico} to have a natural measure. Even in the case it had, it is not expectable that this measure be in the set of generic ergodic measures described in Theorem \ref{teo.oseledets.generico}.
%------------------------------------------------------------------------------
\subsection{Generic symplectic diffeomorphisms}\label{subsection.symplectic} Let consider now a $2n$-dimensional manifold $M$, and let $\omega$ be a symplectic form on $M$, that is, a non-degenerate closed 2-form. If we consider $\mu=\omega\wedge\stackrel{n}{\dots}\wedge\omega$, we obtain a volume form on $M$. We shall say a diffeomorphism $f$ is {\em symplectic} if $f$ preserves this form $\omega$. We denote by $\diff^1_\omega(M)$ the set of such diffeomorphisms.\par
In \cite{manhe1983}, Ma\~n\'e said he would address generic properties of Oseledets splitting for symplectic diffeomorphisms instead of volume preserving diffeomorphisms. He adduced the following two technical reasons: first, partial hyperbolicity follows from domination in the symplectic setting (see for instance \cite{bochi_viana2004} for a proof of this fact), which was what he claimed to have proven. Second, in the proof, he needed to approximate a $C^1$-volume preserving diffeomorphism by a $C^2$-volume preserving diffeomorphism, a tool that was not available at that time, but was possible in the symplectic setting \cite{zehnder1977}.\par
The second obstacle was tackled recently by Avila \cite{avila2008}. The first obstacle has been tackled in dimension 3 by the author in \cite{jrhertz2010}, where it is proved that generically the Oseledets splitting is globally dominated. This further implies generic ergodicity in the zone where the Oseledets splitting is not trivial (see Subsection \ref{subsection.conservative} and Section \ref{section.conservative}). However, this seems difficult yet to obtain in higher dimensions in the conservative setting, we discuss this topic in the next subsection.\newline\par
Given $f\in\diff^1_\omega(M)$ and $f\in R(f)$, let us define the following sub-bundles:
$$E^+(x)=\oplus\{E_i(x):\lambda_i(x)>0\}\qquad  E^-(x)=\oplus\{E_i(x):\lambda_i(x)<0\}$$
and $$E^0(x)=\oplus\{E_i(x):\lambda_i(x)=0\}.$$
For further use, we denote, following Bochi \cite{bochi2010}, the {\em zipped Oseledets splitting}:
$$T_xM=E^+(x)\oplus E^0(x)\oplus E^-(x)$$
Let us note that for a symplectic $f$, the Lyapunov exponents have the following symmetry: if $\lambda_i(x)$ is a Lyapunov exponent then $-\lambda_i(x)$ is also a Lyapunov exponent with the same multiplicity as $\lambda_i(x)$. Therefore, we have that $\dim E^+(x)=\dim E^-(x)$, and $\dim E^0(x)$ is even. Let us divide the set of regular points $R(f)$ into the following disjoint regions. The {\em elliptic region} is the set $Z(f)=\{x\in R(f):T_xM=E^0(x)\}$, that is the invariant set of points having all its Lyapunov exponents zero. The {\em hyperbolic region} is the set $\nuh(f)=\{x\in R(f):T_xM= E^-(x)\oplus E^+(x)\}$, that is, the Pesin region of $f$ defined below in equation (\ref{equation.pesin.region}). And finally, the {\em partially hyperbolic region} $PH(f)=R(f)\setminus(PR(f)\cup Z(f))$, the set of points having a non-trivial center bundle, which is not all $T_xM$.\par
In \cite{manhe1983}, Ma\~n\'e claimed to have proven the following, which was only proven very recently by Bochi:
\begin{theorem}[\cite{bochi2010}] \label{teo.manhe.symplectic}For a generic $f\in \diff^1_\omega(M)$ only one of the following option holds:
\begin{enumerate}
\item $\mu$-almost every $x\in M$, all Lyapunov exponents are zero.
\item $f$ is Anosov
\item $M=Z\cup PH$ ($\mod 0$) with $\mu(PH)>0$. Then we have $PH=\bigcup_{n>0}D_n$ ($\mod 0$) where $D_n$ are $f$-invariant sets over which $f$ is partially hyperbolic. The Oseledets splitting of $f$ extends to a dominated splitting over each $\overline{D_n}$.
\end{enumerate}
\end{theorem}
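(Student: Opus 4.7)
The plan combines upper semi-continuity of the integrated Lyapunov sums, a symplectic Bochi--Viana dichotomy, and the symplectic rigidity of domination recalled in Subsection \ref{subsection.symplectic}. For each $1\leq k<2n$, where $2n=\dim M$, the functional
\[
\Lambda_k(f)=\int_M\bigl(\lambda_1(x,f)+\dots+\lambda_k(x,f)\bigr)\,d\mu(x)
\]
is upper semi-continuous on $\diff^1_\omega(M)$ via the standard subadditivity estimate on $\|\wedge^k Df^N\|$. Hence each $\Lambda_k$ has a residual set of continuity points, and intersecting in $k$ yields a residual $\mathcal{R}\subset\diff^1_\omega(M)$ at which all $\Lambda_k$ are simultaneously continuous; this $\mathcal{R}$ is my candidate generic set for the theorem.

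\textbf{Step 2: symplectic Bochi--Viana dichotomy.} The key analytic ingredient is: for every $f\in\mathcal{R}$, at $\mu$-a.e.\ $x$ and every $k$, either $\lambda_k(x)=\lambda_{k+1}(x)$, or the orbit of $x$ admits a dominated splitting of index $k$. I would prove this by contradiction: if a set of positive $\mu$-measure of orbits has a strict gap at index $k$ without a $k$-dominated splitting, I would construct a $C^1$-small, $\omega$-preserving perturbation collapsing that gap on a set of positive measure, decreasing $\Lambda_k$ by a definite amount and contradicting continuity. This is Bochi's symplectic analogue of the Bochi--Viana theorem; it hinges on Avila's $C^\infty$-approximation theorem \cite{avila2008} in the symplectic setting, which supplies the missing regularity ingredient from Ma\~n\'e's original argument.

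\textbf{Step 3: assembly and trichotomy.} Partition $R(f)$ into the $f$-invariant sets $D_p=\{x:\dim E^-(x)=p\}$, $0\leq p\leq n$. Symplectic symmetry forces $\dim E^+(x)=p$ on $D_p$. For $p\geq 1$, applying Step 2 with $k=p$ and $k=2n-p$ produces dominated splittings of those indices along each orbit in $D_p$; together these give the zipped splitting $E^-\oplus E^0\oplus E^+$ as a single dominated splitting, which extends continuously to $\overline{D_p}$ by the general closure property of dominated splittings. The symplectic domination principle then upgrades $E^\pm$ to uniformly contracting/expanding bundles, so $f|_{\overline{D_p}}$ is partially hyperbolic. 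The trichotomy follows by measure theory: if $\mu(D_0)=1$ we are in (1); if $\mu(D_n)=1$ then $E^0$ is trivial on a full-measure set, and since $\mu$ has full support we get $\overline{D_n}=M$ and $f$ is Anosov, case (2); otherwise $\mu\bigl(\bigcup_{0<p<n}D_p\bigr)>0$ and the $D_p$ with $0<p<n$ furnish the countable decomposition of $PH$ required in (3).

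\textbf{Main obstacle.} The substantive work is Step 2: producing the $C^1$-small, $\omega$-preserving cocycle perturbations that realize the desired exponent collapse. Symplectic-preserving rotations of a cocycle are much less flexible than the linear perturbations used in the dissipative Bochi--Viana theorem, and the gluing argument must respect the closed form $\omega$ globally. Step 1 is routine upper-semicontinuity yoga and Step 3 is essentially bookkeeping once Step 2 and the known symplectic domination principle are in hand, so the whole argument rests on the symplectic cocycle-perturbation machinery developed in \cite{bochi2010}.
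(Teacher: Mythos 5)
Your Steps 1 and 2 reproduce the strategy the survey attributes to Bochi: upper semicontinuity of the integrated exponents $\le_p$ of equation (\ref{equation.integrated.lyapunov.exponent}), restriction to the residual set of simultaneous continuity points, and the dichotomy of Theorem \ref{teo.symplectic.bochi.2} (trivial or dominated along almost every orbit), with the genuinely hard content being the $\omega$-preserving perturbations, where Bochi's random-walk-with-absorbing-barriers construction replaces the Bochi--Viana rotations in the case they fail. That part of your outline is faithful. The gap is in Step 3: your case analysis is not exhaustive, and the missing case is exactly where the theorem has content beyond Step 2. Writing $D_n=\{x:\dim E^0(x)=0\}$ for the nonuniformly hyperbolic part, nothing in Steps 1--2 excludes $0<\mu(D_n)<1$; in that situation alternative (1) fails, $f$ need not be Anosov, and alternative (3) fails as well, because item 3 asserts $M=Z\cup PH\ (\mathrm{mod}\ 0)$, i.e.\ that the Pesin region $\nuh(f)$ is null --- yet $D_n\subset\nuh(f)$ and is disjoint from $PH$. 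Your ``otherwise'' branch silently assumes $\mu(D_n)\in\{0,1\}$. Closing this requires the separate generic dichotomy of Proposition \ref{proposition.hyperbolic.sets}: if $\mu(D_n)>0$, then (after refining by the domination constant) the symplectic domination principle makes the closure of a positive-measure piece of $D_n$ a hyperbolic set of positive volume, and generically this forces $f$ to be Anosov. That is not bookkeeping; it rests on the density of smooth symplectomorphisms \cite{avila2008}, the fact that for $C^{1+\alpha}$ maps a positive-measure hyperbolic set must be the whole manifold, and a semicontinuity argument on the measure of hyperbolic sets (the sets ${\mathcal H}_{l,n}$ in the proof of Proposition \ref{proposition.hyperbolic.sets}).

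A smaller but related omission: the orbitwise domination produced by Step 2 comes with a domination time $l$ that varies with the orbit, so on your sets $D_p=\{x:\dim E^-(x)=p\}$ there is no uniform $l$ and the ``general closure property of dominated splittings'' does not apply directly. You must first decompose each $D_p$ into the countable union of the sets where the splitting is $l$-dominated and take closures of those pieces; this is also why the theorem indexes the decomposition of $PH$ by the domination constant rather than by the index of the splitting. With that refinement, the symplectic principle that a dominating bundle of dimension at most $n$ is uniformly expanding does upgrade each closed piece to a partially hyperbolic set, as you claim.
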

Let us note that this has been achieved by Bochi with techniques that are very different from those proposed by Ma\~n\'e. Indeed, he used new perturbation techniques that involved random walks.
Also, in case one restricts to the open set of partially hyperbolic symplectomorphisms, Bochi can improve Theorem \ref{teo.manhe.symplectic} by proving that generically, all center Lyapunov exponents are zero \cite{bochi2010}. See more in Section \ref{section.symplectic}.\par
Let us note that in item 3, one could have in principle for a symplectomorphism that there is a positive measure set on which all Lyapunov exponents vanish, coexisting with a positive measure set where $f$ is partially hyperbolic. We believe this is not the generic case.
\begin{conjecture}\label{conjecture.symplectic}
For a generic $f\in \diff^1_\omega(M)$ either all Lyapunov exponents vanish almost everywhere, or else the Oseledets splitting extends to a globally dominated (partially hyperbolic) splitting and in this case $f$ is ergodic.
\end{conjecture}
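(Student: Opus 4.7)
The natural starting point is Bochi's Theorem \ref{teo.manhe.symplectic}, which already produces the trichotomy (zero exponents a.e.) / (Anosov) / (decomposition into invariant partially hyperbolic pieces $D_n$). Case (1) is already the first alternative in the conjecture, and case (2) is obviously a partially hyperbolic dominated splitting with ergodicity. So the entire content of the conjecture is the upgrade of case (3). Concretely, I would need three refinements, in the following order: (A) rule out coexistence, proving that generically $\mu(PH)>0$ forces $\mu(Z)=0$; (B) promote the local dominated splittings on the $\overline{D_n}$ to a \emph{single} global dominated splitting on all of $M$; and (C) establish ergodicity in the resulting globally partially hyperbolic situation.

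Step (A) is where I expect the real difficulty. The mixed case is essentially the symplectic analogue of the Avila--Bochi dichotomy cited after Conjecture \ref{conjecture.symplectic}. My approach would combine two ingredients that were not available to Ma\~n\'e but are now standard: Avila's $C^\infty$-density in $\diff^1_\omega(M)$ from \cite{avila2008}, which legitimizes Pesin theory on the perturbation side, and the random-walk perturbation machinery introduced by Bochi in \cite{bochi2010}. The plan is a Bochi--Viana style argument adapted to the symplectic category: assuming a positive measure elliptic set $Z$ coexisting with a positive measure partially hyperbolic set, one uses recurrence to visit $Z$ infinitely often, and each such visit offers an opportunity to perform a small symplectic perturbation that mixes the stable/unstable directions carried in from $PH$ with the neutral directions of $Z$. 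Iterating this yields a residual set on which the integrated sum of positive exponents is not lower semicontinuous at $f$ unless $\mu(Z)=0$ or the whole map is partially hyperbolic. The subtle point, and the main obstacle, is to do this \emph{without destroying} the symplectic form and \emph{without losing control} of the Oseledets directions along long pieces of orbit straddling the boundary between $Z$ and $PH$; this is where a Mañé-type ergodic closing lemma in the symplectic setting, together with the improvement of Abdenur--Bonatti--Crovisier mentioned after Theorem \ref{teo.aprox.period.manhe}, would be invoked.

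Step (B) is a higher-dimensional extension of the 3-dimensional global domination theorem in \cite{jrhertz2010}. Once (A) gives $M=PH$ (mod $0$), Bochi's theorem already supplies a dominated splitting on each $\overline{D_n}$, of a fixed index $(\dim E^-,\dim E^0,\dim E^+)$ on each piece. I would glue these splittings by arguing that the indices cannot vary along the sequence $D_n$: lower semicontinuity of the dimension of the dominated stable/unstable bundles together with the symplectic symmetry $\dim E^+=\dim E^-$ pins down the index, and Hausdorff convergence $\overline{D_n}\to M$ from the ergodic closing lemma lets one pass to a dominated splitting on $M$ by a standard extension argument (closed invariant set with a dominated splitting extends uniquely to its closure). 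The generic density of periodic measures from Theorem \ref{teo.aprox.period.manhe} is what feeds Hausdorff approximation here.

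Step (C) should now be accessible. Having a global partially hyperbolic splitting $E^-\oplus E^0\oplus E^+$ with $E^0$ consisting of Lyapunov-zero directions gives automatic center bunching (trivially, since central exponents vanish). The remaining ingredient is essential accessibility, which is $C^1$-generic in the symplectic partially hyperbolic category by results of Dolgopyat--Wilkinson type; combining this with Burns--Wilkinson's ergodicity criterion then produces ergodicity on a residual refinement of the genericity class. Taken together, (A)+(B)+(C) would give exactly the dichotomy of the conjecture, with (A) being the genuinely open obstacle that seems to require new perturbation ideas beyond those of \cite{bochi2010}.
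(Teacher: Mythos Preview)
The statement you are trying to prove is a \emph{conjecture} in the paper, not a theorem: there is no proof in the paper to compare your attempt against. The paper presents Conjecture~\ref{conjecture.symplectic} as open, and discusses only conditional routes toward it in Section~\ref{section.symplectic}. So your document is a proof \emph{plan} for an open problem, and should be read as such; you yourself flag Step~(A) as ``the genuinely open obstacle.''

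That said, it is worth contrasting your strategy with the paper's own discussion. The paper does not propose attacking coexistence by a Bochi--Viana style perturbation at the interface of $Z$ and $PH$ as you do in Step~(A). Instead, it reduces the conjecture to a topological/ergodic question: if \emph{weak ergodicity} were generic in $\diff^1_\omega(M)$, then from the decomposition $M=Z\cup\bigcup_n D_n$ one of the $D_n$ would be dense whenever $\mu(Z)<1$, and density of a single $D_n$ already forces a \emph{global} partially hyperbolic splitting (via generic transitivity, \cite{arnaud_bonatti_crovisier2005}). Ergodicity then follows from \cite{avila_bochi_wilkinson2009}. In other words, the paper's route collapses your Steps~(A) and~(B) into the single open question ``is weak ergodicity generic?'', and handles Step~(C) by citing Avila--Bochi--Wilkinson rather than Burns--Wilkinson.

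Your Step~(B), as written, has a genuine gap independent of (A). You assert that the indices $(\dim E^-,\dim E^0,\dim E^+)$ cannot vary among the $D_n$, but symplectic symmetry only gives $\dim E^+=\dim E^-$; nothing you cite forces $\dim E^0$ to be constant across different $D_n$. You also invoke ``Hausdorff convergence $\overline{D_n}\to M$ from the ergodic closing lemma,'' but the sets $D_n$ come from Bochi's dominated decomposition, not from closing periodic orbits, and there is no reason their closures should exhaust $M$ in the Hausdorff sense. The paper's mechanism for globalizing is different and cleaner: one needs a \emph{single} $D_n$ to be dense, which is exactly what weak ergodicity (or even an open $D_n$ plus transitivity) would give. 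Finally, in Step~(C) your claim of ``automatic center bunching since central exponents vanish'' presupposes what Bochi proves separately for generic partially hyperbolic symplectomorphisms in \cite{bochi2010}; it is not a free consequence of global partial hyperbolicity.
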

Obviously, the last case includes the case where the symplectomorphism is Anosov. Note that generically, partially hyperbolic symplectomorphisms are ergodic, as has been proved by Avila, Bochi and Wilkinson in \cite{avila_bochi_wilkinson2009}. See more in Section \ref{section.symplectic}.\par
As a corollary of Theorem \ref{teo.manhe.symplectic}, Ma\~n\'e announced the following celebrated result:
\begin{theorem}[Bochi \cite{bochi2002}] For a generic area-preserving diffeomorphism $f$ of a surface, either $f$ is Anosov, or else all Lyapunov exponents vanish almost everywhere.
\end{theorem}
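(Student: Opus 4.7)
The plan is to derive this result as a direct corollary of Theorem \ref{teo.manhe.symplectic}. Since $M$ is a surface, a symplectic form on $M$ is simply an area form, and $\diff^1_\omega(M)$ coincides with the group of area-preserving diffeomorphisms. Hence, for a generic such $f$, Theorem \ref{teo.manhe.symplectic} applies and delivers three mutually exclusive alternatives; the task reduces to ruling out the third.

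The key observation is that the symplectic symmetry of Lyapunov exponents is especially restrictive in dimension $2$. At any regular point $x\in R(f)$, the exponents come in pairs $\{\lambda_i(x),-\lambda_i(x)\}$ with equal multiplicity, so $\dim E^+(x)=\dim E^-(x)$ and $\dim E^0(x)$ is even. Since $\dim T_xM=2$, there are only two possibilities: either $\dim E^0(x)=2$, in which case $T_xM=E^0(x)$ and $x$ lies in the elliptic region $Z(f)$; or $\dim E^0(x)=0$ and $\dim E^+(x)=\dim E^-(x)=1$, in which case $x$ lies in the hyperbolic region $\nuh(f)$. In particular the partially hyperbolic region $\ph(f)$, defined as the set of regular points having a nontrivial proper center bundle, is pointwise empty.

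Because every $f$-invariant $\mu$ is supported on the total-probability set $R(f)$ of regular points, this forces $\mu(\ph(f))=0$, which contradicts the hypothesis $\mu(PH)>0$ required by alternative (3) of Theorem \ref{teo.manhe.symplectic}. Consequently only alternatives (1) and (2) can occur for a generic area-preserving $f$ on a surface, which is precisely Bochi's dichotomy.

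As for what is hard here: essentially nothing, once Theorem \ref{teo.manhe.symplectic} is available. The whole depth lies in that ambient symplectic statement; the passage to surfaces is purely a dimension count combined with the symplectic pairing of exponents, and no additional perturbative input is needed. Historically the implication ran the other way around, with the surface case proved first in \cite{bochi2002} by direct cocycle-perturbation arguments along the lines sketched by Ma\~n\'e; the plan above only records the logical derivation that Ma\~n\'e originally had in mind when he announced the statement as a consequence of his (then-unproven) symplectic theorem.
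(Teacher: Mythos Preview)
Your proposal is correct and matches the paper's presentation: the paper explicitly introduces Bochi's surface theorem ``as a corollary of Theorem \ref{teo.manhe.symplectic}'' without spelling out the argument, and your dimension count (the symplectic pairing forces $\dim E^0\in\{0,2\}$, so $PH(f)=\varnothing$ and alternative (3) is impossible) is exactly the elementary reasoning that makes this corollary immediate. Your historical remark is also accurate and aligns with the paper's comment that Bochi first proved the surface case directly in \cite{bochi2002}, with the full symplectic statement only established later in \cite{bochi2010}.
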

He left some outline of the proof which was published after his death in \cite{manhe.montevideo}. This was only a sketch, and substantial work had to be done by Bochi to finish this proof, which appeared in 2002 \cite{bochi2002}. We further develop this topic in Section \ref{section.symplectic}.
%------------------------------------------------------------------
\subsection{Generic conservative diffeomorphisms}\label{subsection.conservative} Finally, let us consider the set of diffeomorphisms preserving a smooth volume form $m$ on $M$, which we denote by $\diff^1_m(M)$. As we stated above, this case was not addressed by Ma\~n\'e essentially due to two technical reasons: first, at that time it was not known whether one could approximate a diffeomorphism in $\diff^1_m(M)$ by a smooth volume preserving diffeomorphism. This question has been solved by Avila \cite{avila2008}, who proved that smooth conservative diffeomorphisms are dense in $\diff^1_m(M)$. The second obstacle is even more delicate. In the symplectic setting, dominance implies partial hyperbolicity. This is not true in the conservative setting, not even generically. See more in Section \ref{section.conservative}. Moreover, unlike in the symplectic setting, it is not known if dominance implies generically some ergodicity. \par
This does not prevent us from having a conjecture as a leading guide in trying to establish the overall generic behavior of the Oseledets splitting. The following, stated by Avila and Bochi in 2009 seems to be a good starting point:
\begin{conjecture}\cite{avila_bochi2009} \label{conjecture.avila.bochi}For a generic conservative diffeomorphism $f$ in $\diff^1_m(M)$, one of the following alternatives holds. Either
\begin{enumerate}
\item all Lyapunov exponents of $f$ are zero almost everywhere, or
\item $f$ is non-uniformly hyperbolic and ergodic, and the zipped Oseledets splitting $T_xM=E^+(x)\oplus E^-(x)$, defined in Subsection \ref{subsection.symplectic}, is globally dominated.
\end{enumerate}
\end{conjecture}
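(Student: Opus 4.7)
The conjecture is open, so what follows is a strategy rather than a proof. The natural plan is to try to import into $\diff^1_m(M)$ the scheme that Bochi used in \cite{bochi2010} to establish the symplectic dichotomy in Theorem \ref{teo.manhe.symplectic}. First, one would invoke Avila's density result \cite{avila2008} to reduce, at an auxiliary level, to smooth conservative maps where Pesin theory is available. Then one would apply the Ma\~n\'e--Bochi random-walk perturbation machinery to the integrated Lyapunov exponents $f\mapsto\int_M\sum_{i}|\lambda_i(f,x)|\,dm(x)$, aiming to identify its continuity points with diffeomorphisms for which the zipped Oseledets splitting extends to a dominated splitting over the set $H(f)=\{x\in R(f):E^0(x)=\{0\}\}$ on which some exponent is nonzero.

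The key steps, in order, would be: (i) using Theorem \ref{teo.oseledets.generico} together with the density of periodic measures (Theorem \ref{teo.aprox.period.manhe}), obtain a dominated splitting, matched to the signs of the Lyapunov exponents, over the support of every ergodic measure concentrated on $H(f)$; (ii) glue these partial dominated splittings into a single globally dominated splitting $T_xM=E^-(x)\oplus E^+(x)$ over the closure of the union of their supports, exploiting that dominated splittings are uniquely determined by their dimensions and extend to closures of invariant sets; (iii) rule out the coexistence case in which both $m(H(f))>0$ and $m(Z(f))>0$, by using a Bochi-style perturbation that lowers the integrated exponents on any invariant subset of positive measure over which the candidate splitting fails to extend, thereby forcing either $m(H(f))=1$ or $m(Z(f))=1$ at continuity points; (iv) in case $m(H(f))=1$, establish ergodicity by combining Avila's smooth approximation \cite{avila2008} with an accessibility argument modelled on \cite{avila_bochi_wilkinson2009}, and transfer the conclusion back to the $C^1$-generic class via semicontinuity of Birkhoff averages.

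The main obstacle is to close the circle in steps (iii) and (iv). In the symplectic setting, dominance of the zipped splitting automatically produces partial hyperbolicity, which is what unlocks both the globalization and the ergodicity arguments in \cite{bochi2010, avila_bochi_wilkinson2009}; in $\diff^1_m(M)$ this implication is false in general and is not even known generically. Consequently the hardest task is to manufacture, by $C^1$-small conservative perturbations and without a symplectic structure, enough transverse expansion and contraction to guarantee that a globally dominated decomposition $E^-\oplus E^+$ forces a form of partial hyperbolicity, or at least the stable accessibility needed to apply a Pugh--Shub type ergodicity criterion. Any progress on this would simultaneously illuminate the conservative analog of the partial-hyperbolicity-from-dominance phenomenon used in the symplectic case, and is almost certainly where genuinely new techniques, beyond those of Ma\~n\'e and Bochi, are needed.
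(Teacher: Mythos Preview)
This statement is an open conjecture; the paper does not prove it, and you correctly frame your proposal as a strategy rather than a proof. That said, two aspects of the plan are misaligned with what is already known and where the genuine difficulty lies. First, your step (i) invokes Theorem~\ref{teo.oseledets.generico}, but that result concerns only \emph{generic} ergodic measures, so it cannot be used to control every ergodic component supported in $H(f)$. More importantly, the paper's Theorem~\ref{teo.avila.bochi} (Avila--Bochi) already delivers the content of your steps (i)--(ii): for generic $f$, if $m(\nuh(f))>0$ then $f|_{\nuh(f)}$ is ergodic and the zipped Oseledets splitting extends to a globally dominated splitting over all of $M$. So the globalisation step is not where the conjecture is stuck, and rebuilding it from Theorem~\ref{teo.oseledets.generico} and Theorem~\ref{teo.aprox.period.manhe} would be both weaker and redundant.

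Second, your step (iii) frames the remaining obstacle as ruling out coexistence of $m(H(f))>0$ and $m(Z(f))>0$, but the paper isolates a subtler configuration: a positive-measure set on which $E^0$ is non-trivial yet strictly smaller than $T_xM$. When $m(\nuh(f))=0$ one must still exclude this intermediate region to conclude that all exponents vanish almost everywhere; when $m(\nuh(f))>0$ one must rule out further ergodic components with a finer Oseledets splitting. A Bochi-style perturbation dropping $\le_p$ does not obviously separate these cases in the conservative setting. The paper's own suggested routes go through generic (weak) ergodicity (Propositions~\ref{proposition.jana.ergodicidad} and~\ref{proposition.weak.ergodic}) combined with Baraviera--Bonatti perturbations to remove residual zero exponents, rather than through a direct random-walk argument. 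Your final paragraph does correctly identify the core obstruction---that domination does not yield partial hyperbolicity in $\diff^1_m(M)$, so neither accessibility nor the ergodicity criterion of \cite{avila_bochi_wilkinson2009} is available---and this is precisely where the paper also places the missing idea.
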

The conjecture above has been inspired by Theorem \ref{teo.symplectic.bochi} obtained by Bochi for surfaces, and by the following result, obtained by the author, for 3-manifolds, where it is proved that:
\begin{theorem}\cite{jrhertz2010} For a generic conservative diffeomorphism $f$ in $\diff^1_m(M)$, where $M$ is a 3-dimensional manifold, the Oseledets splitting extends to a dominated splitting over the whole manifold. In particular, either:
\begin{enumerate}
\item all Lyapunov exponents of $f$ are zero almost everywhere, or
\item $f$ is non-uniformly hyperbolic and ergodic.
\end{enumerate}
\end{theorem}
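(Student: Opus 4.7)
The strategy combines a Bochi-type dichotomy applied to each ergodic measure, a globalization step specific to dimension $3$, and an ergodicity argument via Avila's $C^2$-density theorem and the Rodriguez Hertz--Rodriguez Hertz--Ures criterion. The heuristic is that in dimension $3$ the scarcity of possible indices for a dominated splitting, together with the conservative connecting lemma, forces the local pictures provided by the Bochi dichotomy to glue into a global one.

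First I would establish a measure-by-measure dichotomy in the spirit of Theorem~\ref{teo.oseledets.generico} but in the conservative setting: for a $C^1$-generic $f\in\diff^1_m(M)$ and every $\mu\in\merg(f)$, either all Lyapunov exponents of $\mu$ vanish, or the Oseledets splitting of $\mu$ extends to a dominated splitting over $\supp(\mu)$. This adapts Bochi's perturbation techniques; along a regular orbit failing to be dominated, one uses $C^1$-small conservative perturbations to mix directions with exponents of different signs. When Pesin theory is needed to control geometry of stable/unstable manifolds, Avila's density of $C^2$ conservative diffeomorphisms in $\diff^1_m(M)$ is invoked.

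Second, I would globalize the splitting. By Theorem~\ref{teo.aprox.period.manhe} and the Ergodic Closing Lemma, periodic measures are dense in $\merg(f)$ and their supports accumulate in Hausdorff topology on the support of every ergodic measure. Decomposing Lebesgue $m$ into ergodic components (which have full-measure union of supports in $M=\supp(m)$) shows that the union of supports of periodic measures is dense in $M$. In dimension $3$ the only possible indices for a non-trivial dominated splitting are $1$ and $2$. I would argue that generically all periodic orbits admitting a non-trivial dominated splitting share the same index: otherwise the Bonatti--Crovisier conservative connecting lemma would produce a heteroclinic link between periodic orbits of different indices, along which a $C^1$-perturbation destroys one of the dominations, contradicting genericity. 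Once the index is constant on a dense $f$-invariant subset, the closedness and continuous extension property of dominated splittings over compact invariant sets yield a dominated splitting on all of $M$.

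Third, suppose the global splitting is non-trivial; we must derive non-uniform hyperbolicity and ergodicity. In the 3D conservative setting, volume preservation together with a dominated splitting of constant index forces the extremal subbundle to be uniformly hyperbolic (sums of exponents and domination control the sign of the extreme exponent), yielding a partially hyperbolic decomposition $T_xM=E^s\oplus E^c\oplus E^u$ with $\dim E^c=1$. Using Avila's $C^2$-density, I would then apply the result of F.~Rodriguez Hertz, M.~A.~Rodriguez Hertz and R.~Ures stating that, for $C^2$ volume-preserving partially hyperbolic diffeomorphisms with one-dimensional center, $C^1$-generic accessibility implies ergodicity. A further Bochi-type perturbation in the center direction rules out a vanishing center exponent on a positive measure set, giving non-uniform hyperbolicity.

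The main obstacle is the globalization step: proving that the dominated index is constant along all periodic orbits so that the locally defined pieces assemble into a single splitting on $M$. The restriction to dimension $3$ is essential, since it leaves very few indices and admits the symmetry arguments above; in higher dimensions this step appears out of reach, which is precisely why Conjecture~\ref{conjecture.avila.bochi} remains open.
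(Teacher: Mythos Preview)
Your three-stage architecture (orbitwise dichotomy, globalization, ergodicity via accessibility) matches the paper's scaffolding, but the central step~--- globalization --- is handled very differently, and your version has a real gap.

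In the paper the reduction does not pass through periodic orbits at all. One first invokes the Bochi--Viana theorem (Theorem~\ref{teo.bochi.viana}) and the Avila--Bochi theorem (Theorem~\ref{teo.avila.bochi}) to obtain, for a generic $f\in\diff^1_m(M^3)$, the trichotomy: (i) all exponents vanish a.e.; (ii) $\nuh(f)\circeq M$, $f|_{\nuh(f)}$ is ergodic and the zipped splitting is globally dominated; or (iii) there is an $f$-invariant set of positive Lebesgue measure on which exactly one exponent vanishes. In case~(iii), conservativity in dimension~$3$ forces the three exponents to be $+,0,-$, and Bochi--Viana domination on that set makes it a \emph{partially hyperbolic set} of positive measure. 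The whole content of \cite{jrhertz2010} is then Theorem~\ref{teo.jana.partially.hyperbolic.sets}: generically, a partially hyperbolic set of positive measure forces $f$ to be globally partially hyperbolic. The proof of that step is not a periodic-orbit/connecting-lemma argument; it uses the geometry of the interstitial regions in the complement of the strong stable and unstable laminations, together with the fact (specific to one-dimensional center, \cite{rhrhu2008}) that accessibility classes are submanifolds. Only after global partial hyperbolicity is established does one appeal to accessibility and the ergodicity criteria.

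Your proposed globalization, by contrast, tries to pin down a single ``dominated index'' on periodic orbits and then extend by continuity. This does not work as stated. First, a hyperbolic periodic orbit in dimension~$3$ carries a $1\oplus1\oplus1$ dominated splitting, hence dominated splittings of \emph{both} indices~$1$ and~$2$; there is no well-defined single index to compare. Second, even if one reads ``index'' as the Morse index, producing a heteroclinic link between orbits of different indices and perturbing to ``destroy a domination'' does not contradict any generic property you have at hand: Bochi--Viana only asserts domination along Lebesgue-a.e.\ orbits, and periodic orbits are null sets. Coexistence of periodic points of indices~$1$ and~$2$ is in fact generic and robust in this setting, so no contradiction arises. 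Finally, your Step~3 assertion that a global dominated splitting of fixed index in conservative $3$D automatically yields uniform hyperbolicity of the extremal bundle (and hence a three-way partially hyperbolic splitting) is not justified; the paper never needs such a statement because the three-way structure is obtained \emph{before} globalization, on the positive-measure set coming from case~(iii).

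In short: replace your periodic-orbit globalization by the positive-measure partially hyperbolic set argument of Theorem~\ref{teo.jana.partially.hyperbolic.sets}; that is where the genuinely three-dimensional work (laminations, accessibility classes as manifolds) is done.
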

Note that in this case, not only the zipped Oseledets splitting is globally dominated as in item 2 of Conjecture \ref{conjecture.avila.bochi}, but also the finest Oseledets splitting. See more details in Section \ref{section.conservative}. \par
So far, in greater dimensions, the best approximation to Conjecture \ref{conjecture.avila.bochi} is the following result by Avila and Bochi:
\begin{theorem}\cite{avila_bochi2009}\label{teo.avila.bochi} For a generic conservative diffeomorphism $f$ in $\diff^1_m(M)$, only one of the following alternatives holds. Either:
\begin{enumerate}
\item the Pesin region of $f$ has null measure: $m(\nuh(f))=0$; that is, almost everywhere, there is a zero Lyapunov exponent for $f$, or
\item the orbit of almost every orbit in $\nuh(f)$ is dense, and $f$ restricted to $\nuh(f)$ is ergodic. Moreover, the zipped Oseledets splitting $T_xM=E^+(x)\oplus E^-(x)$ extends to a globally dominated splitting.
\end{enumerate}
\end{theorem}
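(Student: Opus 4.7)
My overall strategy is to combine a Mañé–Bochi style dichotomy for the sum of positive Lyapunov exponents with Avila's $C^1$-density of smooth conservative diffeomorphisms, and then add a Hopf-type ergodicity argument. First I would show that the integrated sum $L^+(f)=\int_M\sum_{\lambda_i(f,x)>0}\lambda_i(f,x)\,dm(x)$ is upper semicontinuous on $\diff^1_m(M)$, so that its points of continuity form a residual set $\mathcal R_1$. For $f\in\mathcal R_1$, a conservative version of the Mañé–Bochi perturbation lemma, available thanks to Avila's smoothing theorem \cite{avila2008}, should imply that on any invariant subset $\Lambda\subset\nuh(f)$ over which the zipped Oseledets splitting $E^+\oplus E^-$ fails to be dominated, one can produce arbitrarily small conservative $C^1$-perturbations collapsing the positive exponents on $\Lambda$; combined with the continuity of $L^+$ at $f$, this forces the zipped splitting to be dominated $m$-almost everywhere on $\nuh(f)$.

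To upgrade this to a splitting globally dominated on $M$, I would use that dominated splittings extend continuously to the closure of their domain. By the generic density of periodic measures (Theorem \ref{teo.aprox.period.manhe}) and the Hausdorff approximation of supports of ergodic measures by those of periodic ones, any piece of $M\setminus\overline{\nuh(f)}$ is approached by periodic orbits carrying measures whose Lyapunov exponents are either dominated in a compatible way or all zero; in either case there should be no obstruction to extending $E^+\oplus E^-$ to a globally dominated splitting on $M$.

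For the second alternative, I would assume $m(\nuh(f))>0$ and exploit the global domination from the previous step together with Avila's density of $C^\infty$-conservative diffeomorphisms to obtain Pesin stable and unstable laminations with uniformly transverse angles along the extension of the zipped splitting. A conservative Hayashi-type connecting lemma would then allow one to glue these laminations to link any two positive-measure subsets of $\nuh(f)$; a Hopf argument on the resulting absolutely continuous holonomies yields ergodicity of $f|_{\nuh(f)}$, and density of $m$-almost every orbit follows because any invariant positive-measure set meeting every open ball must be full. The main obstacle I expect is precisely this last step: Hopf–Pesin theory is inherently $C^{1+\alpha}$, and matching smooth-approximation arguments with the original $C^1$-generic $f$ without losing ergodicity in the limit requires a delicate use of the continuous (not merely measurable) invariant distributions supplied by the global domination, together with a careful $C^1$-generic residual argument of Bonatti–Crovisier type adapted to the conservative category.
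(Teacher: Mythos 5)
This theorem is quoted in the survey from \cite{avila_bochi2009} without a proof, so your proposal has to be measured against the strategy of that source. Your first step is sound and is essentially Theorem \ref{teo.bochi.viana} of Bochi--Viana: upper semicontinuity of the integrated exponents, genericity of continuity points, and the ``mixing'' perturbation at a continuity point yield that for $m$-almost every $x\in\nuh(f)$ the splitting $E^+\oplus E^-$ along the orbit of $x$ is dominated. (Avila's smoothing theorem is not what makes this step work; the Bochi--Viana perturbations are carried out directly in $\diff^1_m(M)$.) But note that what this gives is only a mod~$0$ decomposition $\nuh(f)=\bigcup_{p,l}\Lambda_{p,l}$ into invariant sets on which the splitting with $\dim E^-=p$ is $l$-dominated, with $l$ depending on the piece --- not a single dominated splitting, and certainly not one defined on all of $M$.

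The genuine gap is in your second step, and it propagates. To extend an $l$-dominated splitting from $\Lambda_{p,l}$ to $M$ you need $\overline{\Lambda_{p,l}}=M$, i.e.\ you need the density of almost every orbit of $\nuh(f)$ \emph{first}: a dominated splitting extends continuously to the closure of its domain, but there is no mechanism for defining $E^+\oplus E^-$ with the correct dimensions on a region of $M\setminus\overline{\nuh(f)}$, and approximating such a region by periodic orbits whose exponents are all zero gives you nothing to extend, since those orbits carry no preferred $p$-dimensional invariant subbundle. Your proposal only obtains density of almost every orbit at the very end, as a corollary of ergodicity, while ergodicity is extracted from the global domination --- this is circular. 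In \cite{avila_bochi2009} the density of almost every orbit in $\nuh(f)$ is a separate and central new theorem, obtained from the Ergodic Closing Lemma \cite{manhe1982} and the generic structure of homoclinic classes in the spirit of \cite{bonatti_crovisier2004}; it is what forces $\overline{\Lambda_{p,l}}=M$ and hence the global domination. Finally, for ergodicity you correctly identify the $C^{1+\alpha}$ obstruction but offer no actual bridge: gluing Pesin laminations with a connecting lemma is not an argument, and the known route applies the ergodicity criterion of \cite{rhrhtu2009} to $C^2$ conservative approximations supplied by \cite{avila2008}, and then transfers ergodicity on $\nuh$ back to the generic $C^1$ diffeomorphism by a semicontinuity argument. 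As it stands, the proposal reproduces the Bochi--Viana input but is missing the two ingredients that constitute the actual content of the theorem.
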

Let us note that, in fact, the Oseledets splitting over the Pesin region $\nuh(f)$ extends to a dominated splitting over the manifold. However, there could be other ergodic components with positive measure, on which the Oseledets splitting is finer (it cannot be coarser). Another thing to mention is that possibility (1) does indeed happen in every manifold: Grin provides in each manifold $M$ an open set ${\mathcal U}\subset\diff^1_m(M)$, such that the generic diffeomorphism in ${\mathcal U}$ has a vanishing Lyapunov exponent almost everywhere. Moreover, in 3-manifolds, this open set can be chosen so that the generic diffeomorphism in ${\mathcal U}$ all Lyapunov exponents vanish almost everywhere \cite{grin2010}.\par
One of the main obstacles to tackle in Theorem \ref{teo.avila.bochi} is the possibility of the coexistence of a positive measure set $Z$ of points with all vanishing Lyapunov exponents with a positive measure set $P$ of points (in some $D_n$ defined below) such that the Oseledets splitting has a non-trivial bundle $E^0(x)$ corresponding to zero Lyapunov exponents. In this case, a global dominated splitting would not exist, and $f$ would be not ergodic.\par
We should also take into account a previous result by Bochi and Viana, which states the following:
\begin{theorem}\cite{bochi_viana2005} \label{teo.bochi.viana} For a generic conservative diffeomorphism $f$ in $\diff^1_m(M)$, $m$-almost every $x$ in $M$, either all Lyapunov exponents of $f$ are zero, or else the Oseledets splitting over the orbit of $x$ is dominated.
\end{theorem}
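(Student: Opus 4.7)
My plan is to follow the classical Bochi--Mañé scheme: combine upper semicontinuity of the integrated Lyapunov exponents with a conservative perturbation lemma that trades lack of domination for exponent collapse.

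For each $1\le k<\dim M$, consider the integrated exponent
\[
\Lambda_k(f)=\int_M\bigl(\lambda_1(x,f)+\cdots+\lambda_k(x,f)\bigr)\,dm(x),
\]
with the Lyapunov exponents listed in non-increasing order and counted with multiplicity. Since $\phi_n(x)=\log\|\wedge^k Df^n(x)\|$ is subadditive in $n$ and $m$ is $f$-invariant, Kingman's theorem gives $\Lambda_k(f)=\inf_{n\ge 1}\tfrac{1}{n}\int_M\phi_n\,dm$, and each $f\mapsto \tfrac{1}{n}\int_M\phi_n\,dm$ depends continuously on $f\in\diff^1_m(M)$. Hence $\Lambda_k$ is upper semicontinuous, so its set of continuity points is a dense $G_\delta$. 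Intersecting over $k=1,\dots,\dim M-1$ yields a residual set $\mathcal R\subset\diff^1_m(M)$ on which all the $\Lambda_k$ are simultaneously continuous; this $\mathcal R$ is the generic set in which the theorem will be established.

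Fix $f\in\mathcal R$ and suppose, for contradiction, that there is a positive-measure set $A\subset M$ of regular points $x$ at which some Lyapunov exponent is nonzero \emph{and} the Oseledets decomposition along $\mathrm{orb}(x)$ admits some spectral gap that is not dominated. After a countable decomposition I may fix a common gap index $\ell$ on $A$, so that the splitting $(E_1(x)\oplus\cdots\oplus E_\ell(x))\oplus(E_{\ell+1}(x)\oplus\cdots\oplus E_{k(x)}(x))$ is non-dominated along $\mathrm{orb}(x)$. By definition, along arbitrarily long segments of $\mathrm{orb}(x)$ the angle between these two complementary bundles drops below any prescribed threshold. The core ingredient is the Mañé--Bochi interchange lemma adapted to the conservative $C^1$ category: whenever the two bundles are sufficiently close along a long enough orbit segment, one can construct a volume-preserving, $C^1$-small perturbation of $f$, supported in a thin flow-box around that segment, that rotates a vector of $E_1\oplus\cdots\oplus E_\ell$ into $E_{\ell+1}\oplus\cdots\oplus E_{k(x)}$ and vice versa. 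Patching such rotations over a Kakutani tower above $A$ on disjointly supported flow-boxes, and appealing to Avila's density of smooth conservative maps \cite{avila2008} to legitimize the construction in the $C^1$-conservative category, produces $g$ arbitrarily $C^1$-close to $f$ with $\Lambda_\ell(g)\le\Lambda_\ell(f)-\delta$ for some $\delta=\delta(A)>0$.

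This contradicts continuity of $\Lambda_\ell$ at $f$. It follows that for $f\in\mathcal R$ and $m$-a.e.~$x$, every spectral gap of the Lyapunov spectrum at $x$ is realized by a dominated sub-splitting along $\mathrm{orb}(x)$; concatenating these sub-splittings, the full Oseledets decomposition over $\mathrm{orb}(x)$ is dominated, the alternative being that the spectrum is trivial and all Lyapunov exponents vanish. The main obstacle, and the technical heart of the argument, is the patching step: one must run many disjointly supported infinitesimal rotations so that (i) the global $C^1$-size stays small, (ii) the volume form is preserved throughout, and (iii) the pointwise exponent reduction accumulates into a uniform \emph{integrated} drop $\delta$ in $\Lambda_\ell$, rather than a pointwise drop on orbits of positive measure. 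Items (i) and (iii) rely on a Borel--Cantelli-style quantification of the frequency and depth of the angle collapses forced by non-domination, while item (ii) requires a Pugh--Robinson-flavored conservative version of Mañé's interchange lemma. Once this machinery is in place, the scheme is essentially that of Bochi for surfaces, lifted to arbitrary dimension through the exterior-power formulation of $\Lambda_k$.
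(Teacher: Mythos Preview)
Your proposal follows the same overall scheme that the paper sketches in Section~\ref{section.symplectic} (see Theorem~\ref{teo.symplectic.bochi.2} and the discussion after it, where the paper explicitly says this is ``the same general strategy followed in \cite{bochi_viana2005} for the conservative case''): define the integrated exponents $\Lambda_k$ (the paper's $\le_p$ in equation~(\ref{equation.integrated.lyapunov.exponent})), use upper semicontinuity to obtain a residual set of continuity points, and then show that at a continuity point any non-dominated Oseledets gap can be exploited by a small conservative perturbation supported on a tower to drop the relevant integrated exponent, contradicting continuity. So the approach is correct and essentially identical to the one the paper describes.

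One technical correction is worth flagging. Your sentence ``By definition, along arbitrarily long segments of $\mathrm{orb}(x)$ the angle between these two complementary bundles drops below any prescribed threshold'' mischaracterizes non-domination. Failure of $l$-domination along an orbit is a statement about the \emph{expansion ratio} in Definition~\ref{definition dominated splitting} failing for every $l$, not about angles collapsing; the sub-bundles are invariant along the orbit, so their mutual angle need not shrink at all. What non-domination actually provides, and what the paper refers to as the bundles being ``mixable'', is the existence of arbitrarily long orbit segments along which the cumulative expansions of $E$ and $F$ are comparable, so that a single $C^1$-small rotation at one end of the segment suffices to send an $E$-vector into $F$ by the other end. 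That is the true content of the Ma\~n\'e--Bochi interchange lemma you invoke; with this adjustment your outline matches the paper's account. (Also, your appeal to \cite{avila2008} is unnecessary here: the Bochi--Viana perturbations are built directly in $\diff^1_m(M)$ and predate Avila's regularization.)
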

As a consequence of Theorem \ref{teo.bochi.viana}, the manifold can be written modulo zero as the countable union of a set $Z$ of points where all the Lyapunov exponents of $f$ are zero, and sets $D_n$ where the Oseledets splitting over the orbit of $x$ is $n$-dominated. \par
There are some hypotheses under which Conjecture \ref{conjecture.avila.bochi} could be established, in fact, a positive answer to the following question, attributed to Katok by Bochi and Viana in \cite{bochi_viana2005}, would imply Conjecture \ref{conjecture.avila.bochi}, as we explain in Section \ref{section.conservative}:
\begin{question}\label{question.katok}
Is ergodicity a generic property in $\diff^1_m(M)$?
\end{question}
However, it is likely that establishing this question could be harder than answering Conjecture \ref{conjecture.avila.bochi}. Nevertheless, there are intermediate hypotheses that could give a better global description of the generic behavior of the Oseledets splitting. This is the case, for instance, of {\em weak ergodicity}. A conservative diffeomorphism is weakly ergodic when the only compact invariant set with positive measure is the whole manifold, see more details in Definition \ref{def.weak.ergodicity}. The following question, weaker than Question \ref{question.katok}, is also unsolved, see also \cite{bonatti_crovisier2004}:
\begin{question}
Is weak ergodicity a generic property in $\diff^1_m(M)$?
\end{question}
Unlike in the previous context of ergodicity, Conjecture \ref{conjecture.avila.bochi} does not follow from weak ergodicity, at least not in an immediate way. However, weak ergodicity would improve our working conditions, for instance, we do have the following generic dichotomy: either all Lyapunov exponents vanish almost everywhere, or else the finest Oseledets splitting over a positive measure set, extends to a global dominated splitting. We further discuss this topic in Section \ref{section.conservative}.
%---------------------------------------------------------------------------
\section{Preliminaries}\label{section.preliminaries}
%---------------------------------------------------------------------------
Let $M$ be a compact Riemannian manifold. Let us denote by $\diff^1(M)$ the set of $C^1$ diffeomorphisms, endowed with the $C^1$ topology.\par
As we have stated in the introduction, given a diffeomorphism $f\in\diff^1(M)$,  for a total probability set of points there exists a splitting, the {\em Oseledets splitting}, $T_xM=E_1(x)\oplus\dots\oplus E_{k(x)}(x)$ and numbers $\hat\lambda_1(x)>\dots>\lambda_{k(x)}(x)$, called the {\em Lyapunov exponents} such that for every non-zero vector $v\in E_i(x)$, we have
$$\lambda(x,v)=\hat\lambda_i(x),$$
where $\lambda(x,v)$, defined in Equation (\ref{equation Lyapunov exponent}), is the exponential growth of the norm of $Df^n(x)$ along the direction of $v$. The dimension of each $E_i(x)$ is called the {\em multiplicity of} $\hat\lambda_i(x)$. Counting each Lyapunov exponent with its multiplicity, we obtain, for an $n$-manifold, that
$$\lambda_1(x)\geq\lambda_2(x)\geq\dots\geq\lambda_n(x).$$

One of the generic properties we would like to establish for the Oseledets splitting of a $C^1$ generic diffeomorphism its domination. Let us recall this concept:
\begin{definition} \label{definition dominated splitting} Given an $f$-invariant set $\Lambda$, and two invariant sub-bundles of $T_\Lambda M$, $E$ and $F$, such that $T_\Lambda M=E_\Lambda\oplus F_\Lambda$, we call this splitting an {\em $l$-dominated splitting} if for all $x\in\Lambda$ and all unit vectors $v_E\in E_x$ and $v_F\in F_x$ we have:
\begin{equation}\label{equation dominated splitting}
\|Df^l(x)v_F\|\leq \frac12\|Df^l(x)v_E\|
\end{equation}
we denote $E_\Lambda\succ_l F_\Lambda$ or $F_\Lambda\prec E_\Lambda$
\end{definition}
Note that we do not require $\Lambda$ to be compact. In particular, we shall denote $E_x\prec_l F_x$ when the inequality (\ref{equation dominated splitting}) is satisfied for the orbit of $x$.
A splitting will be called {\em dominated} if there exists $l\in\N$ such that it is $l$-dominated. When a splitting is dominated, a vector not in $E$ or $F$ will converge to $F$ under forward iterates and to $E$ under backward iterates.
\begin{definition}\label{definition dominated splitting muchos}
If we have invariant sub-bundles $T_\Lambda M=E^1_\Lambda\oplus\dots\oplus E^k_\Lambda$ over an invariant (not necessarily compact) set $\Lambda$, we will say that the splitting is {\em $l$-dominated} if $$E^1_\Lambda\oplus\dots\oplus E^i_\Lambda\succ_l E^{i+1}_\Lambda\oplus\dots E^k\Lambda\qquad\forall i=1,\dots,k-1$$
\end{definition}
As before we will say that the splitting is dominated if it is $l$-dominated for some $l$. A particular case of a dominated splitting is the following:
\begin{definition}\label{definition partially hyperbolic} We shall say that the invariant splitting
$$T_\Lambda=E_\Lambda^s\oplus E_\Lambda^c\oplus E_\Lambda^u$$
over the invariant set $\Lambda$ is {\em $l$-partially hyperbolic} if it is $l$-dominated and $Df$ is contracting on $E_\Lambda^s$ and $Df^{-1}$ is contracting on $E_\Lambda^u$.
\end{definition}
Analogously, we shall say that the  splitting is {\em partially hyperbolic} if it is $l$-partially hyperbolic for some $l\in \N$.\newline\par
We shall be particularly interested in measures not having zero Lyapunov exponents. Namely,
\begin{definition}\label{definition.hyperbolic.measure} Given $f\in\diff^1(M)$, an ergodic measure $\mu\in\merg(f)$ will be called a {\em hyperbolic measure} if all Lyapunov exponents are different from zero $\mu$-almost everywhere.
\end{definition}
In the conservative setting, we shall identify the region where exponents are different from zero as the {\em Pesin region}. That is, the Pesin region of $f\in\diff^1_m(M)$ is the set $\nuh(f)$ such that:
\begin{equation}\label{equation.pesin.region}
\nuh(f)=\{x\in R(f):\lambda(x,v)\ne0\quad\forall v\in T_xM\setminus\{0\}\}
\end{equation}
We shall say that a conservative diffeomorphism $f$ is {\em non-uniformly hyperbolic} if $\nuh(f)$ equals the whole manifold modulo a zero measure set.
%----------------------------------------------------------------------------------------

\section{$C^1$ generic behavior of the Oseledets splitting in $\diff^1(M)$}\label{section.generic}
%-----------------------------------------------------------------------------------------
In this section we shall consider the advances that have been made towards describing the generic behavior of the Oseledets splitting in the dissipative setting. A good account of much of this progress can be found in the article by Abdenur, Bonatti and Crovisier \cite{abdenur_bonatti_crovisier2008}.\par
Indeed, Theorem \ref{teo.oseledets.generico} has been improved with the following information:
\begin{theorem}\cite{abdenur_bonatti_crovisier2008} For a generic diffeomorphism $f\in \diff^1(M)$ and a generic measure $\mu \in \merg(f)$, we have:
\begin{enumerate}
\item $\mu$ has zero entropy: $h_\mu(f)=0$.
\item $\mu$ is a hyperbolic measure.
\end{enumerate}
\end{theorem}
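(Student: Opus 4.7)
The plan is to establish items (1) and (2) as separate generic properties of $\mu\in\merg(f)$ and then take the intersection, which is again residual. In both cases the common skeleton is to exhibit a dense set in $\merg(f)$ already enjoying the property, and then verify the property defines a $G_\delta$. The dense set is supplied by Theorem \ref{teo.aprox.period.manhe}: periodic measures $\P er(f)$ are dense in $\merg(f)$ for a generic $f$.

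\textbf{Zero entropy.} Every periodic measure satisfies $h_\mu(f)=0$, so the density half is immediate. For the $G_\delta$ part I would use an upper-semicontinuous upper bound for metric entropy: Katok's characterization of $h_\mu(f)$ in terms of $(n,\varepsilon)$-spanning sets of $\mu$-measure at least $1-\delta$ is upper semicontinuous in $\mu$ on $\merg(f)$ and coincides with $h_\mu(f)$. Consequently the sets $\{\mu : h_\mu(f) < 1/k\}$ are open, contain $\P er(f)$, and are therefore open and dense; their intersection is a $G_\delta$-dense set of zero-entropy measures.

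\textbf{Hyperbolicity.} By Kupka--Smale, for a generic $f\in\diff^1(M)$ every periodic orbit is hyperbolic, so every $\mu\in\P er(f)$ is a hyperbolic measure in the sense of Definition \ref{definition.hyperbolic.measure}. It remains to show that $H(f)=\{\mu\in\merg(f):\mu\text{ hyperbolic}\}$ is a $G_\delta$, i.e.\ that its complement is a countable union of closed nowhere dense sets. One exploits that the partial sums $\Lambda_k(\mu)=\sum_{i=1}^{k}\lambda_i(\mu)$ are upper semicontinuous in $\mu$, so the level sets where two consecutive partial sums coincide (equivalently, where a zero exponent is sandwiched into the spectrum at a given position) are closed. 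That each such closed set has empty interior is then a perturbation statement: Theorem \ref{teo.oseledets.generico} gives a dominated splitting over $\supp(\mu)$, and combining the Ergodic Closing Lemma with a Ma\~n\'e-type perturbation inside each dominated sub-bundle produces an arbitrarily $C^1$-close diffeomorphism for which a nearby hyperbolic periodic measure witnesses strict inequality between the consecutive partial sums.

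\textbf{Main obstacle.} Item (1) is essentially soft once one has a semicontinuous version of entropy. The real difficulty lies in item (2): although approximating hyperbolic periodic measures exist in abundance, individual Lyapunov exponents are only Baire-1 in $\mu$, so weak-$*$ limits of hyperbolic measures can collapse an exponent to zero. Ruling this out is precisely what the dominated splitting of Theorem \ref{teo.oseledets.generico} is for: each sub-bundle of the splitting is continuous on $\supp(\mu)$, and the Lyapunov spectrum within each block is governed by the induced cocycle, which behaves much better under weak-$*$ limits. The technical heart of the proof is to use this control, together with the Ergodic Closing Lemma, to show that arbitrarily close to any non-hyperbolic $\mu$ sits a hyperbolic $\mu'$ of the appropriate index, so that the non-hyperbolic stratum is meager in $\merg(f)$.
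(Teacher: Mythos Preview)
The survey itself does not prove this theorem; it attributes both items to \cite{abdenur_bonatti_crovisier2008} and points to the engine behind them, namely the sharpened approximation theorem stated immediately afterwards: for a generic $f$, \emph{every} $\mu\in\merg(f)$ is approached by hyperbolic periodic measures $\mu_n$ with $\mu_n\to\mu$ weakly, $\supp(\mu_n)\to\supp(\mu)$ in Hausdorff distance, \emph{and} $\lambda_i(\mu_n)\to\lambda_i(\mu)$ for every $i$. Your outline never invokes this convergence of the Lyapunov spectrum, and that omission is exactly where both of your ``$G_\delta$'' steps break.

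For item (1), the appeal to Katok's spanning-set characterisation is not available: Katok's formula is a theorem about $C^{1+\alpha}$ diffeomorphisms (and, in its sharp form, about hyperbolic measures), while here $f$ is merely $C^1$-generic. In the bare $C^1$ category the entropy map $\mu\mapsto h_\mu(f)$ is \emph{not} upper semicontinuous in general, so you cannot conclude that $\{\mu:h_\mu(f)<1/k\}$ is open. For item (2), the assertion that $\{\mu:\lambda_k(\mu)=0\}=\{\mu:\Lambda_k(\mu)=\Lambda_{k-1}(\mu)\}$ is closed is unjustified: each $\Lambda_j$ is upper semicontinuous, but the difference of two upper semicontinuous functions has no semicontinuity whatsoever, and one can easily have $\mu_n\to\mu$ with $\lambda_k(\mu_n)=0$ yet $\lambda_k(\mu)\neq 0$ (or vice versa). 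So the non-hyperbolic stratum is not an $F_\sigma$ by this argument alone. The tools you assemble afterwards --- Kupka--Smale, the dominated extension of the Oseledets splitting from Theorem~\ref{teo.oseledets.generico}, Ma\~n\'e/Franks perturbations along the sub-bundles, the Ergodic Closing Lemma --- are indeed the correct ones, and your diagnosis of the ``main obstacle'' is accurate; but in \cite{abdenur_bonatti_crovisier2008} these ingredients are organised precisely into the exponent-matching approximation above, which is what replaces the semicontinuity you are trying to use.
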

A sketch of the proof of item 1 was already given in \cite{manhe1983}. In \cite{abdenur_bonatti_crovisier2008} this result is re-obtained as a corollary of a more general statement. This result, as in the case of Theorem \ref{teo.oseledets.generico} is based on the following improvement of Theorem \ref{teo.aprox.period.manhe}:
\begin{theorem}\cite{abdenur_bonatti_crovisier2008} For a generic diffeomorphism $f\in \diff^1(M)$, and {\em any} ergodic measure $\mu\in\merg(f)$, there exists a sequence of hyperbolic periodic measures $\mu_n\subset\P er(f)$ such that:
\begin{enumerate}
\item $\mu_n\to\mu$ in the weak topology
\item $\supp(\mu_n)\to\supp(\mu)$ in the Hausdorff topology
\item $\lambda_i(\mu_n)\to\lambda_i(\mu)$ for all $i=1,\dots,n$, where $\lambda_i(\mu_n)$ and $\lambda_i(\mu)$ are the Lyapunov exponents of, respectively, $\mu_n$ and $\mu$, considered with their multiplicity.
\end{enumerate}
\end{theorem}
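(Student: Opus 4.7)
The approach is to strengthen Mañé's Ergodic Closing Lemma on three fronts and then promote the resulting one-step perturbative statement to a statement about a single generic $f$ via a Baire category argument. Theorem \ref{teo.aprox.period.manhe} already handles weak approximation of a \emph{generic} ergodic $\mu$; the refinement requires (a) replacing ``generic $\mu$'' with ``any $\mu$'', (b) adding Hausdorff convergence of supports, and (c) upgrading to hyperbolic periodic measures with convergent Lyapunov spectra.

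First I would fix an arbitrary $f \in \diff^1(M)$ and an arbitrary $\mu \in \merg(f)$, and derive a one-step perturbative version: for every $C^1$-neighborhood $\mathcal U$ of $f$ and every $\eps>0$, there is $g \in \mathcal U$ with a hyperbolic periodic orbit $\gamma$ such that $\mu_\gamma$ is $\eps$-close to $\mu$ in the three senses required. By the Ergodic Closing Lemma, $\mu$-a.e.\ $x$ can be shadowed by a periodic point of a $C^1$-nearby diffeomorphism. By the Birkhoff ergodic theorem applied, respectively, to the continuous observables (for weak convergence), to the indicators of a countable basis of open sets cut by bump functions (for Hausdorff convergence of $\supp(\mu_\gamma)$ to $\supp(\mu)$, using ergodicity so that a $\mu$-typical orbit equidistributes on $\supp(\mu)$), and to $x \mapsto \log\|\wedge^k Df(x)\|$ for each $k$ (for Lyapunov exponents via the Oseledets--Birkhoff identity $\tfrac{1}{n}\log\|\wedge^k Df^n(x)\| \to \lambda_1(\mu)+\cdots+\lambda_k(\mu)$), a long enough $\mu$-typical orbit segment already carries all the approximation data. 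Shadowing transfers this to the closed-up orbit of $g$, and a further Franks-style perturbation along $\gamma$ makes it hyperbolic without disturbing the bookkeeping.

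Next I would transfer to a residual set. Fix a countable basis $\{W_j\}$ of the weak topology on $\M(M)$, a countable basis $\{V_k\}$ of the Hausdorff topology on compact subsets of $M$, and $n \in \N$. Define
$$\mathcal O_{j,k,n} = \bigl\{ f : f \text{ has a hyperbolic periodic measure } \mu_\gamma \in W_j \text{ with } \supp(\mu_\gamma) \in V_k \text{ and exponents } 1/n\text{-approximating those of some } \mu \in W_j\bigr\} \cup \mathcal N_{j,k,n},$$
where $\mathcal N_{j,k,n}$ is the set of $f$ whose ergodic measures miss a slightly shrunken version of $W_j$ (or whose supports miss $V_k$). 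The first piece is open by $C^1$-persistence and continuous dependence of hyperbolic periodic orbits, and $\mathcal N_{j,k,n}$ is open by upper semicontinuity of $\M(f)$ in $f$. Density of $\mathcal O_{j,k,n}$ is exactly the content of the previous paragraph: any $f \notin \mathcal N_{j,k,n}$ has an ergodic measure in the target region, which is then closed up inside $\mathcal O_{j,k,n}$. The intersection $\mathcal R = \bigcap_{j,k,n}\mathcal O_{j,k,n}$ is residual, and unwinding the definitions shows that every $f \in \mathcal R$ satisfies the conclusion, taking a diagonal sequence of $(j,k,n)$ to shrink the neighborhoods of $\mu$.

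The main obstacle is item (3), convergence of the full Lyapunov spectrum with multiplicity. Weak and Hausdorff approximation follow fairly directly from shadowing, but Lyapunov exponents depend only measurably on the base point, and they reflect the asymptotic behavior of $Df^n$, which can in principle be sensitive to the small perturbation region introduced by the closing lemma and to the finite time horizon along $\gamma$. The delicate point is to ensure that for the closed-up orbit, the eigenvalues of $Dg^m|_{T_p M}$ (with $m$ the period) actually approximate \emph{all} $\lambda_i(\mu)$ rather than collapsing some of them. This requires exploiting the Oseledets decomposition over the shadowed $\mu$-typical orbit together with a quantitative control on how the perturbation interacts with the exterior-power norms; this is essentially the content of the Abdenur--Bonatti--Crovisier improvement over Mañé's original argument.
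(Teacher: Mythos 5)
The paper offers no proof of this statement: it is quoted from Abdenur--Bonatti--Crovisier with only the remark that it rests on Ma\~n\'e's Ergodic Closing Lemma and the Birkhoff theorem. Your outline follows exactly that strategy, and for items (1) and (2) it is essentially complete: one picks a $\mu$-typical point that is simultaneously Birkhoff-generic for a countable family of observables, Oseledets-regular, well-closable, and has dense orbit in $\supp(\mu)$, and the closing lemma transfers a long orbit segment to a periodic orbit of a nearby $g$ whose equidistributed measure and support inherit the approximation.

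The genuine gap is item (3), which you correctly identify as the crux but then defer to the cited source. The difficulty is sharper than sensitivity of $Df^n$ to the perturbation: the Lyapunov exponents of $\mu_\gamma$ are the logarithms of the \emph{eigenvalue moduli} of $Dg^m(p)$, whereas Kingman/Oseledets along the shadowed segment controls only $\tfrac1n\log\|\wedge^kDf^n(x)\|$, i.e.\ the \emph{singular values}. For a single long product these can differ badly (a near-rotation mixing Oseledets directions collapses eigenvalue moduli without affecting exterior-power norms), and eigenvalues of a product of $n$ matrices are not stable under $C^1$-small perturbations of each factor, even though the exterior-power norms are (each factor is perturbed multiplicatively by $1+\eps$, so the partial sums $\lambda_1+\dots+\lambda_k$ of \emph{singular} exponents move by $O(\eps)$ --- but that does not pin down the spectrum of $Dg^m(p)$). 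The missing ingredient is a recurrence argument: restrict to a positive-measure set on which the Oseledets splitting and the finite-time exponents along each subbundle are nearly constant (Lusin), and close up at a return time to that set, so that $Df^n(x)$ is, in adapted coordinates, nearly block-diagonal with blocks of norm and conorm $e^{n(\lambda_i\pm\eps)}$ that are exponentially separated; this dominated-type structure survives the closing perturbation and forces the eigenvalue moduli of $Dg^m(p)$ to be $e^{m(\lambda_i\pm\eps)}$ with the correct multiplicities. Without this, (3) is not proved. A second, fixable, soft spot is your set $\mathcal N_{j,k,n}$: since a weak limit of ergodic measures of $f_k$ need only be an invariant (possibly non-ergodic) measure of $f$, the set $\{f:\merg(f)\cap K=\emptyset\}$ need not be open, so openness of $\mathcal O_{j,k,n}$ fails as stated. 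The standard repair is to set $\mathcal O_j=\mathcal A_j\cup\interior\bigl(\diff^1(M)\setminus\mathcal A_j\bigr)$, where $\mathcal A_j$ is the open set of diffeomorphisms possessing a hyperbolic periodic measure with the required data in $W_j$; this union is open and dense by construction, and for $f$ in the residual intersection the perturbative step rules out $f\in\interior\bigl(\diff^1(M)\setminus\mathcal A_j\bigr)$ whenever $f$ carries an ergodic measure in the target region.
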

These results can be improved if one works under additional hypotheses. We shall consider transitivity and isolation.
\subsection{Properties of measures supported on an isolated transitive set} Let $\Lambda$ be an $f$-invariant set for a diffeomorphism $f\in\diff^1(M)$. We shall say that $\Lambda$ is {\em transitive} if it contains a dense orbit, and we shall say that it is {\em isolated} if it has an {\em isolating neighborhood} $V\subset M$ such that
$$\bigcap_{n\in\Z}f^n(V)=\Lambda.$$
For further use, given {\em any} $f$-invariant set $\Lambda$, let us denote by $\M_f(\Lambda)$ the set of $f$-invariant measures supported in $\Lambda$, denote also by $\merg_f(\Lambda)$ the set $\merg(f)\cap\M_f(\Lambda)$ and finally, let us call $\P_f(\Lambda)$ the set of periodic measures supported in $\Lambda$, that is $\P er(f)\cap\M_f(\Lambda)$.\par
In many ways, the dynamic behavior of isolated transitive sets resemble that of basic sets for Axiom A diffeomorphisms. This is the particular case of invariant measures. Let us recall that in the seventies, Sigmund proved the following for the space of invariant measures of a basic set of an Axiom A diffeomorphism:
\begin{theorem}\cite{sigmund1970}\label{theorem.sigmund} Let $\Lambda$ be a basic set of an Axiom A diffeomorphism $f$. Denote by $\M=\M_f(\Lambda)$ the set of invariant probabilities supported in $\Lambda$. Then we have:
\begin{enumerate}
\item Every $\mu\in\M$ is approximated by hyperbolic periodic measures $\mu_n\in\P_f(\Lambda)$.
\item The generic measure $\mu\in\M$ is ergodic.
\item The generic measure $\mu\in\M$ has entropy zero: $h_\mu(f)=0$.
\end{enumerate}
\end{theorem}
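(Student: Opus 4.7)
The plan is to exploit Bowen's \emph{specification property}, which holds on every basic set $\Lambda$ of an Axiom A diffeomorphism. Recall that specification guarantees, for each $\eps>0$, an integer $N=N(\eps)$ with the following property: given finitely many orbit arcs $(x_j,f(x_j),\ldots,f^{n_j-1}(x_j))$ in $\Lambda$ and any gap lengths $p_j\geq N$, one can find a periodic point $p\in\Lambda$ whose orbit successively $\eps$-shadows each arc, with total period $\sum_j n_j+\sum_j p_j$. Because $\Lambda$ is hyperbolic, every such periodic orbit is automatically hyperbolic.

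For item (1), I would let $\mu\in\M_f(\Lambda)$ be arbitrary and use the ergodic decomposition to approximate $\mu$ weakly by finite convex combinations $\sum_{j=1}^k \alpha_j\nu_j$ with $\nu_j\in\merg_f(\Lambda)$ and rational weights $\alpha_j$. For each $\nu_j$ pick a Birkhoff-generic point $x_j\in\Lambda$, so that long orbital averages of continuous functions along the forward orbit of $x_j$ converge to integrals against $\nu_j$. Given a target tolerance, choose a large $N$ and block lengths $n_j\approx \alpha_j N$; applying specification, glue the arcs of length $n_j$ starting at $x_j$ into a single periodic orbit $\gamma_N\subset\Lambda$. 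A direct estimate shows $\mu_{\gamma_N}$ is weakly close to $\sum_j\alpha_j\nu_j$, and letting tolerances shrink yields $\mu_{\gamma_N}\to\mu$. Hence $\P_f(\Lambda)$ is dense in $\M_f(\Lambda)$, proving (1).

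Item (2) then follows quickly: ergodic measures are precisely the extreme points of the Choquet simplex $\M_f(\Lambda)$, and in a metrizable compact convex set the extreme points form a $G_\delta$ (equivalently, $\merg_f(\Lambda)$ is cut out by a countable family of $L^2$-conditions on Birkhoff averages against a dense sequence of continuous functions). Since periodic measures are ergodic, item (1) shows this $G_\delta$ set is dense in $\M_f(\Lambda)$, hence residual. For item (3), the key extra input is that $f|_\Lambda$ is expansive (a standard feature of hyperbolic sets), which implies by a classical argument that the entropy map $\mu\mapsto h_\mu(f)$ is upper semicontinuous on $\M_f(\Lambda)$. Consequently each set $\{\mu:h_\mu(f)<1/n\}$ is open; since periodic measures have zero entropy and are dense by (1), each of these open sets is also dense, and their countable intersection $\{\mu:h_\mu(f)=0\}$ is residual.

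The main obstacle is the specification argument for non-ergodic $\mu$ in (1): one must pair block lengths $n_j$ with gap lengths $p_j\geq N(\eps)$ so that (a) the Birkhoff averages along each arc are already accurate to within $\eps$, (b) the cumulative gaps contribute a negligible fraction of the period, and (c) the weights $n_j/(\sum_i n_i+\sum_i p_i)$ approximate the prescribed $\alpha_j$. This bookkeeping, although standard, is precisely where the hyperbolicity, transitivity, and isolation of $\Lambda$ are genuinely used; the remaining steps are then essentially formal consequences of specification together with expansiveness.
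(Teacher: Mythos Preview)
Your proposal is correct and aligns with the brief outline the paper gives: the paper remarks only that item 1 follows from Bowen's specification property on basic sets and that item 2 then follows because ergodic measures form a $G_\delta$, which is precisely the route you flesh out. The paper says nothing about the proof of item 3, but your argument via expansiveness and upper semicontinuity of the entropy map is the standard one and is fine.
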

As noted by Sigmund, item 2 follows from item 1, and the fact that ergodic measures are a $G_\delta$ set. Item 1, in turn, follows essentially from Bowen's specification property valid for basic sets.
In \cite{abdenur_bonatti_crovisier2008}, Abdenur, Bonatti and Crovisier obtained the following generalization of Theorem \ref{theorem.sigmund} and Theorem \ref{teo.oseledets.generico}:
\begin{theorem}\cite{abdenur_bonatti_crovisier2008} Let $\Lambda$ be an isolated transitive set of a $C^1$-{\em generic} diffeomorphism $f\in\diff^1(M)$. Then we have:
\begin{enumerate}
\item Every $\mu\in\M_f(\Lambda)$ is approximated by hyperbolic periodic measures $\mu_n\in\P_f(\Lambda)$.
\item The generic measure $\mu\in\M_f(\Lambda)$ is ergodic and satisfies $\supp(\mu)=\Lambda$.
\item The generic measure $\mu\in\M_f(\Lambda)$ has entropy zero: $h_\mu(f)=0$.
\item The generic measure $\mu\in\M_f(\Lambda)$ is  hyperbolic.
\item For a generic measure $\mu\in \M_f(\Lambda)$ its Oseledets splitting can be extended to a dominated splitting over $\Lambda$.
\end{enumerate}
\end{theorem}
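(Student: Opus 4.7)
The plan is to follow Sigmund's Baire-theoretic template for basic sets (Theorem \ref{theorem.sigmund}), replacing the unavailable specification property by the $C^1$-generic periodic-approximation theorem stated just above and by the perturbation arguments behind Theorem \ref{teo.oseledets.generico}. The key input is the \emph{localization} of the approximation result to $\Lambda$: if $V$ is an isolating neighborhood of $\Lambda$ and $\mu\in\merg_f(\Lambda)$, then the hyperbolic periodic measures $\mu_n$ approximating $\mu$ have supports converging to $\supp(\mu)\subset\Lambda$ in the Hausdorff topology, hence are eventually contained in $V$; being $f$-invariant, they lie in $\bigcap_{k\in\Z}f^k(V)=\Lambda$, so $\mu_n\in\P_f(\Lambda)$. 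Extending this from ergodic to arbitrary invariant measures requires density of finite convex combinations of ergodic measures in $\M_f(\Lambda)$, together with a genericity argument closing such combinations into a single periodic orbit on the transitive set $\Lambda$ (a $C^1$-generic surrogate for Bowen's specification); this establishes item (1).

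Items (2)--(4) then follow along classical Baire lines from (1). Ergodicity is a $G_\delta$ condition in $\M_f(\Lambda)$, so density of the (ergodic) periodic measures yields generic ergodicity. Full support is also $G_\delta$ in the weak topology, since $\{\mu:\mu(U)>0\}$ is open for every open $U\subset M$ by lower semicontinuity of $\mu\mapsto\mu(U)$; rational convex combinations of periodic measures whose orbits collectively fill a dense subset of $\Lambda$ supply density, and together these give (2). For (3), periodic measures have zero entropy and are dense, and one shows $\{\mu:h_\mu(f)<\eps\}$ is $G_\delta$-dense via a Katok-type combinatorial count along the approximating periodic orbits, since a global upper-semicontinuity statement for $h_\mu$ is unavailable in the $C^1$ category. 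Hyperbolicity in (4) is $G_\delta$ because the ordered integrated Lyapunov exponents $\mu\mapsto\lambda_i(\mu)$ are Baire-$1$ with the appropriate one-sided semicontinuity, and density comes from the hyperbolic periodic measures delivered by (1) combined with the convergence of exponents in item (3) of the preceding approximation theorem.

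For (5), a generic $\mu$ satisfies (1)--(4); in particular $\supp(\mu)=\Lambda$, $\mu$ is hyperbolic, and $\mu$ is approximated in the Hausdorff sense by hyperbolic periodic measures $\mu_n\in\P_f(\Lambda)$ whose Lyapunov exponents converge to those of $\mu$. I would then run Ma\~n\'e's perturbation dichotomy from Theorem \ref{teo.oseledets.generico}: if the Oseledets splitting of $\mu$ is not $l$-dominated on $\Lambda$ for any $l\in\N$, then a Franks-type $C^1$ perturbation along carefully chosen orbits in $\Lambda$ mixes two adjacent Oseledets subspaces, producing at a nearby diffeomorphism a periodic orbit in (the continuation of) $\Lambda$ carrying a zero Lyapunov exponent, contradicting the generic hyperbolicity of periodic measures in $\P_f(\Lambda)$. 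Hence the Oseledets splitting of a generic $\mu$ extends to a dominated splitting over $\overline{\supp(\mu)}=\Lambda$.

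The main obstacle I anticipate is the combination of (5) with the zero-entropy control in (3). For (5), Ma\~n\'e's perturbation must be executed \emph{while preserving} both the isolation and the transitivity of $\Lambda$, and the resulting domination must persist at the residual $f$ rather than only at its perturbations; this is what makes the Kupka--Smale/Franks toolbox delicate in the localized transitive--isolated setting. For (3), the absence of $C^1$-upper semicontinuity of metric entropy forces one to track entropy directly through each step of the periodic approximation, and this combinatorial bookkeeping is the true technical heart of the Abdenur--Bonatti--Crovisier programme.
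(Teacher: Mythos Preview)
The paper does not prove this theorem; it quotes it from \cite{abdenur_bonatti_crovisier2008} and offers only one remark, concerning item (1): the approximation is obtained not from specification but from a weaker $C^1$-generic substitute which the paper names the \emph{barycenter property}. For generic $f$, given periodic points $p,q\in\Lambda$ and $\eps>0$ there is $N$ such that for any $n_1,n_2$ one finds a periodic point $z\in\Lambda$ whose orbit $\eps$-shadows the orbit of $p$ for $n_1$ iterates and then the orbit of $q$ for $n_2$ iterates, with a transition of length at most $N$. Your ``$C^1$-generic surrogate for Bowen's specification'' that closes finite convex combinations of periodic measures into a single periodic orbit inside $\Lambda$ is precisely this barycenter property, so on item (1) your outline matches the paper's commentary, even though you do not name the tool.

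For items (2)--(5) the paper supplies no argument at all, deferring entirely to \cite{abdenur_bonatti_crovisier2008}; there is therefore nothing in the present survey to compare your sketch against. Your Baire-theoretic derivations of (2)--(4) from (1), and your identification of the entropy control in (3) and the persistence-of-domination issue in (5) as the genuine technical obstacles, are consonant with the structure of the cited source, but already go beyond what this survey records.
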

Here item 1 is not deduced from the specification property but from a weaker feature of periodic points in an isolated transitive set of a generic diffeomorphism called the {\em barycenter property}. Indeed, for a residual set of diffeomorphisms, the set $\P er_f(\Lambda)$ of periodic points of $\Lambda$ satisfies the following: given $p$ and $q$ in $\P er_f(\Lambda)$, and $\eps>0$ there exists an integer $N>0$ such that for any pair of positive integers $n_1$ and $n_2$ there is a periodic point $z\in\P er_f(\Lambda)$ such that
$$d(f^k(z),f^k(p))<\eps\qquad\mbox{for } k=1,\dots,n_1$$
and
$$d(f^k(z),f^k(q))<\eps\qquad\mbox{for } k=n_1+N,\dots, n_1+N+n_2.$$
\subsection{Abundance of non-regular points} Finally, let us remark, as in the introduction, that in general, the set of regular points is meager. Let us say that a point $x\in M$ is {\em regular} for $f\in \diff^1(M)$ if for every continuous function $\varphi:M\to \R$, the following limit exists. A complete proof of this can be found in \cite{abdenur_bonatti_crovisier2008}:
$$\tilde\varphi(x)=\lim_{|n|\to\infty}\frac{1}{n}\sum_{k=0}^{n-1}\varphi\circ f^k(x)$$
As mentioned in \cite{manhe.libro} without proof, regular points are in general few from the topological point of view:
\begin{theorem}\cite{abdenur_bonatti_crovisier2008} For a generic diffeomorphism $f\in\diff^1(M)$, the set of regular points is meager.
\end{theorem}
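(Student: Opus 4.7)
The plan is to find, for $C^1$-generic $f$, a dense $G_\delta$ set of points $x\in M$ whose forward Birkhoff averages fail to converge. A point $x\in M$ is irregular as soon as the sequence of empirical measures $\mu_n^x:=\frac{1}{n}\sum_{k=0}^{n-1}\delta_{f^k(x)}$ has at least two distinct weak-$*$ accumulation points $\nu_1,\nu_2$, because any continuous $\varphi$ with $\int\varphi\,d\nu_1\neq\int\varphi\,d\nu_2$ witnesses non-convergence. Thus it suffices to produce two distinct $f$-invariant measures $\nu_1,\nu_2$ and, for each $i\in\{1,2\}$, each $k\in\N$, and each rational $\eps>0$, to prove that the open set
\[
A_{i,k,\eps}:=\{x\in M:\exists\,n\geq k,\ d(\mu_n^x,\nu_i)<\eps\}
\]
is dense in $M$; the $G_\delta$ intersection $\bigcap_{i,k,\eps}A_{i,k,\eps}$ will then be a residual set of irregular points.

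To produce $\nu_1,\nu_2$, I would invoke standard $C^1$-generic results based on Pugh's closing lemma and the Hayashi/Bonatti--Crovisier connecting lemma, which guarantee that a generic $f\in\diff^1(M)$ carries a non-trivial chain-recurrent class $\Lambda$ (for instance a quasi-attractor) in which hyperbolic periodic orbits are dense. Pick two distinct such orbits $\gamma_1,\gamma_2\subset\Lambda$ and set $\nu_i:=\mu_{\gamma_i}$.

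The key step is the density of $A_{i,k,\eps}$. Every point of $M$ has its $\omega$-limit in the chain-recurrent set, so lies in the basin of some chain-recurrent class; a Kupka--Smale/Conley decomposition argument shows that the union of basins of the (at most countably many) trivial attracting periodic orbits is an $F_\sigma$ set with empty interior, so one may assume $x_0\in M$ lies in the basin of a non-trivial quasi-attractor $\Lambda$. For such $x_0$, large iterates $f^N(x_0)$ enter any prescribed neighborhood of $\Lambda$; inside this neighborhood one invokes the barycenter property established in \cite{abdenur_bonatti_crovisier2008}---which asserts that two periodic points in a non-trivial class can be linked in arbitrarily prescribed proportions by a single periodic orbit, generalizing Bowen's specification property to the $C^1$-generic setting---to find a periodic point $z$ close to $f^N(x_0)$ whose orbit spends a fraction $1-o(1)$ of its length shadowing $\gamma_i$, so that $\mu_n^z$ is $\eps$-close to $\nu_i$ for appropriate $n\geq k+N$. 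Setting $y:=f^{-N}(z)$ produces a point in $B(x_0,\delta)$ whose Birkhoff averages agree with those of $z$ up to a boundary term of order $N/n$, establishing $y\in A_{i,k,\eps}$.

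The main obstacle is precisely this density step, and in particular the availability of the barycenter property in the $C^1$-generic setting. Once one trusts this property, the remaining argument is a relatively straightforward Baire-category exercise; but the barycenter property itself is a non-trivial output of $C^1$-generic perturbation theory (connecting lemma, ergodic closing lemma), and it is the point where the genericity of $f$ enters crucially. Without it one cannot realize prescribed periodic measures as empirical measures of orbits starting from nearby points, and the argument collapses---consistent with the fact that the theorem does genuinely fail outside a residual set (e.g.\ for Morse--Smale systems, where every point is regular).
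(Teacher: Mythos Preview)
The paper does not give its own proof of this statement; it simply cites \cite{abdenur_bonatti_crovisier2008}. So there is no in-paper argument to compare against, but your proposal has a genuine gap worth flagging.

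The gap stems from misreading the definition of ``regular''. The paper requires $\lim_{|n|\to\infty}\frac{1}{n}\sum_{k=0}^{n-1}\varphi(f^k(x))$ to exist---note the $|n|\to\infty$, so the forward and backward Birkhoff averages must both converge \emph{and coincide}. You instead try to build a residual set of points where the \emph{forward} empirical measures $\mu_n^x$ oscillate. That goal is unreachable on a non-empty open set of diffeomorphisms: any $f$ with a hyperbolic periodic sink (in particular every Morse--Smale $f$) has an open basin on which every point is forward-regular, so your sets $A_{i,k,\eps}$ are not dense there, and your claim that the union of basins of attracting periodic orbits ``has empty interior'' is simply false. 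Your own closing remark that the theorem ``fails\ldots for Morse--Smale systems'' gives the game away: Morse--Smale is \emph{open} and non-empty in $\diff^1(M)$, so a statement holding for generic $f$ cannot fail there. With the bidirectional definition the theorem \emph{does} hold for Morse--Smale $f$: off the finitely many periodic points one has $\omega(x)$ a sink and $\alpha(x)$ a distinct source (no cycles), so the two one-sided averages disagree and $x$ is irregular. A correct argument must therefore treat separately the points whose $\omega$-limit lies in a non-trivial class---where your barycenter/specification idea does produce forward oscillation---and the points attracted to and repelled from periodic orbits, where one instead uses $\alpha(x)\neq\omega(x)$.
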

%----------------------------------------------------------------------------------------
\section{$C^1$ generic behavior of the Oseledets splitting in $\diff^1_\omega(M)$}\label{section.symplectic}
%-----------------------------------------------------------------------------------------
In this section we review some advances and tools available for generic symplectic diffeomorphisms. There are many authors that have worked in this area, hence we shall only choose some results that we think that are relevant to the line of the generic behavior of the Oseledets splitting in $\diff^1_\omega(M)$.\par
For the rest of this section let us consider $M$ a symplectic manifold and $\omega$ a symplectic form. The volume form coming from $\omega$ will be denoted by $\mu$. As we have mentioned in Subsection \ref{subsection.symplectic}, it was only recently that Theorem \ref{teo.manhe.symplectic} was proved by Bochi. In other words, we have the following:
\begin{theorem}\cite{bochi2010} \label{teo.symplectic.bochi}For a generic diffeomorphism $f\in\diff^1_\omega(M)$, only one of the following options holds:
\begin{enumerate}
\item $f$ is Anosov, or
\item $\mu$-almost every $x$ either all Lyapunov exponents are zero, or the Oseledets splitting is partially hyperbolic over the orbit of $x$.
\end{enumerate}
\end{theorem}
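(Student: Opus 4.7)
The plan is to cast Theorem \ref{teo.symplectic.bochi} as an upper-semicontinuity dichotomy, following the strategy that Bochi pioneered in dimension two and later extended via new perturbation tools. For $k=1,\ldots,2n$ with $2n=\dim M$, set
\[
\Lambda_k(f)=\int_M \bigl(\lambda_1(x,f)+\cdots+\lambda_k(x,f)\bigr)\, d\mu(x),
\]
the integrated sum of the top $k$ Lyapunov exponents counted with multiplicity. Each $\Lambda_k$ is upper semicontinuous on $\diff^1_\omega(M)$ by Kingman's subadditive ergodic theorem applied to $\log\|\wedge^k Df^n\|$, so the common continuity points of $\Lambda_1,\ldots,\Lambda_{2n}$ form a residual set $\mathcal{R}\subset\diff^1_\omega(M)$. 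I will prove the dichotomy on $\mathcal{R}$.

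Fix $f\in\mathcal{R}$ and suppose $f$ is not Anosov. Introduce the bad set
\[
B=\bigl\{x\in R(f):\text{some }\lambda_i(x)\neq 0\text{ and no dominated splitting exists over the orbit of }x\bigr\}.
\]
The core claim is $\mu(B)=0$. Assume for contradiction $\mu(B)>0$. Then I would construct a $C^1$-small symplectic perturbation $g$ of $f$ with $\Lambda_k(g)<\Lambda_k(f)-\delta$ for some $k$ and some $\delta>0$, contradicting continuity of $\Lambda_k$ at $f$. The ingredients are: Avila's density of $C^\infty$ symplectomorphisms in $\diff^1_\omega(M)$ (so that Zehnder-type pasting lemmas are available); Bochi's orbit-wise random-walk perturbation on the symplectic Grassmannian; and the symplectic symmetry $\lambda_i\leftrightarrow -\lambda_i$, which lets pairs of small rotations between $E^+(x)$ and $E^-(x)$ (made possible by the absence of domination on $B$) mix expanding and contracting directions and, by Birkhoff averaging over disjoint orbit windows in $B$, strictly decrease $\sum_{i=1}^k \lambda_i$.

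Once $\mu(B)=0$ is established, alternative (2) follows by invoking the Bochi--Viana theorem that in the symplectic setting a dominated splitting is automatically partially hyperbolic with $\dim E^s=\dim E^u$: the orbit-wise dominated splittings present off $B$ are in fact partially hyperbolic orbit-by-orbit, which is the content of (2). If moreover the subset of $R(f)$ where $E^0(x)=\{0\}$ has full measure and the hyperbolic estimates are uniform across the support, one promotes the local splitting to a global dominated (hence uniformly hyperbolic) one, and compactness of $M$ together with the generic transitivity of symplectomorphisms yields alternative (1); otherwise one lands in (2).

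The main obstacle is the symplectic perturbation at the heart of the argument. In Bochi's two-dimensional predecessor \cite{bochi2002} it suffices to rotate a single angle, but in dimension $2n\ge 4$ one must simultaneously rotate within multiple symplectically conjugate planes while preserving $\omega$, staying $C^1$-close to $f$, and --- most delicately --- ensuring that the combinatorial accumulation of these infinitesimal rotations along an orbit of length $N$ actually decreases the ergodic average of the top Lyapunov sum rather than cancelling out. Bochi's random-walk construction on the symplectic Grassmannian is exactly the device engineered to handle this bookkeeping, and it is where essentially all the technical weight of the proof sits.
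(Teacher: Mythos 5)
Your proposal follows essentially the same route the paper sketches for Bochi's theorem: upper semicontinuity of the integrated exponents $\le_p$, restriction to the residual set of their continuity points, a tower-supported symplectic perturbation (with the random-walk-on-angles device) forcing a drop of some $\le_p$ wherever the Oseledets splitting is neither trivial nor dominated along the orbit, and the symplectic fact that a dominated splitting upgrades to a partially hyperbolic one. Two cosmetic quibbles: in the symplectic setting the needed $C^1$-approximation by smooth symplectomorphisms is Zehnder's, not Avila's (the latter is for the volume-preserving case), and by the symmetry $\lambda_i\leftrightarrow-\lambda_i$ only $p=1,\dots,n$ need be considered.
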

We remark once again that the techniques involved in the theorem above are somehow different from those in the rest of this review. The general strategy, as we shall briefly explain below is as in Bochi and Viana's work \cite{bochi_viana2005}, but the difference the perturbation method in one of the cases. Let us explain a little more, without pretending to get into too much detail. \par
Given $f\in\diff^1_\omega(M)$ and a regular point $x\in M$, consider the Lyapunov exponents counted with multiplicity:
$$\lambda_1(f,x)\geq\dots\lambda_{2n}(f,x)$$
Just as we will do in the conservative case (Section \ref{section.conservative}) let us consider, for each $p=1,\dots,n$ the {\em integrated $p$-exponent} of $f$:
\begin{equation}
\label{equation.integrated.lyapunov.exponent}
\le_p(f)=\int_M(\lambda_1(f,x)+\dots+\lambda_p(f,x))\,d\mu
\end{equation}
In the symplectic setting, it suffices to consider only the first $n$ (integrated) exponents, since the other ones are symmetric, as we have mentioned in Subsection \ref{subsection.symplectic}. Again, just as in the conservative setting, we have that the map $\le_p:\diff^1_\omega(M)\to \R$ is upper-semicontinuous; hence, its points of continuity form a residual subset of $\diff^1_\omega(M)$. The idea of the proof is, as in \cite{bochi_viana2005}, to show that:
\begin{theorem}\cite{bochi2010}\label{teo.symplectic.bochi.2}
If $f\in\diff^1_\omega(M)$ is a point of continuity of $\le_p$ for $p=1,\dots,n$, then $\mu$-almost every $x$ in $M$, the Oseledets splitting is either trivial or dominated along the orbit of $x$.
\end{theorem}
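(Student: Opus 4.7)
The plan is to argue by contradiction, following the Bochi--Viana scheme adapted to the symplectic setting. Suppose the conclusion fails: then there exist an integer $p\in\{1,\dots,n\}$, a constant $\delta>0$, and an $f$-invariant set $\Gamma\subset M$ with $\mu(\Gamma)>0$, such that for every $x\in\Gamma$ one has $\lambda_p(f,x)-\lambda_{p+1}(f,x)>\delta$ (so there is a genuine gap at level $p$) but the splitting $F^p(x)\oplus G^p(x)=T_xM$, with $F^p(x)$ the sum of the Oseledets spaces associated with $\lambda_1,\dots,\lambda_p$ and $G^p(x)$ its complement, is \emph{not} $l$-dominated along the orbit of $x$ for any $l\in\N$. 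By the symplectic symmetry of the spectrum it is enough to consider $p\le n$.

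The heart of the argument is a symplectic version of the Ma\~n\'e perturbation lemma: given $\eps>0$ and an orbit segment $x,f(x),\dots,f^N(x)$ along which the angle between $F^p(f^i(x))$ and $G^p(f^i(x))$ becomes arbitrarily small infinitely often, there is a $g\in\diff^1_\omega(M)$, $\eps$-close to $f$ in the $C^1$ topology and coinciding with $f$ outside a thin tube around the segment, whose derivative cocycle along this segment partially mixes $F^p$ with $G^p$ and thus reduces the gap between the top $p$ and bottom $2n-p$ singular values of the iterated cocycle. In the conservative setting such mixing is produced by small localised rotations; in the symplectic setting in higher dimension an arbitrary rotation between two Lagrangian or isotropic pieces will not preserve $\omega$, and this is where the random-walk perturbation device introduced by Bochi in \cite{bochi2010} enters: a suitably distributed product of small admissible symplectic perturbations has, with positive probability, the desired equalising effect on the singular values while remaining $C^1$-close to the identity inside $\mathrm{Sp}(2n,\R)$.

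Granted this local tool, the globalisation is routine. Using a Kakutani--Rokhlin tower adapted to $\Gamma$ (whose existence requires only that $f$ be aperiodic on $\Gamma$, which one may assume), one covers a definite proportion of $\Gamma$ by pairwise disjoint orbit segments on each of which the perturbation lemma applies. Performing the perturbations simultaneously and re-gluing yields $g\in\diff^1_\omega(M)$, arbitrarily $C^1$-close to $f$, with
\begin{equation*}
\le_p(g)\le \le_p(f)-\eta,
\end{equation*}
where $\eta=\eta(\delta,\mu(\Gamma))>0$ is independent of the $C^1$-size of the perturbation. Since $g$ may be taken arbitrarily close to $f$ while $\eta$ is fixed, this contradicts the assumption that $f$ is a continuity point of $\le_p$.

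The main obstacle is the symplectic perturbation lemma itself. Upper semi-continuity of $\le_p$ and the tower globalisation are standard; what is delicate is producing a small $C^1$-perturbation that (i) genuinely lives in $\diff^1_\omega(M)$, (ii) closes up the gap at level $p$ along a given non-dominated orbit without being felt by the other integrated exponents $\le_q$ enough to affect the contradiction, and (iii) can be passed from the infinitesimal cocycle statement to an actual diffeomorphism via an appropriate local chart. Overcoming (i)--(iii) requires the random-walk machinery of \cite{bochi2010}, which replaces the classical deterministic rotation lemma valid on surfaces and is the genuinely new ingredient beyond the Bochi--Viana template.
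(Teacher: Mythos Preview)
Your proposal is essentially correct and follows the same scheme the paper sketches: contradiction via a perturbation supported on a tower that mixes the two Oseledets pieces and forces a definite drop in $\le_p$, with the random-walk device of \cite{bochi2010} identified as the genuinely new symplectic ingredient. One small correction: the mechanism that allows mixing is not that ``the angle between $F^p(f^i(x))$ and $G^p(f^i(x))$ becomes arbitrarily small''---for the Oseledets splitting this need not happen---but rather that the failure of $l$-domination for every $l$ yields iterates $m$ along which $Df^m|_{E}$ and $Df^m|_{F}$ are \emph{mixable} in the sense the paper describes (no uniform separation of expansion rates), and it is this, not small angles, that lets a $C^1$-small perturbation send $E$ into $F$ over a large-measure set of points in the base of the tower.
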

Assume there is a point at which the Oseledets splitting is not trivial nor dominated, that is, the Oseledets splitting admits the invariant decomposition $T_x=E\oplus F$, but $E$ does not dominate $F$. Let $p=\dim E$. There is some iterate $m$ such that $Df^m(x)|E_x$ is ``mixable" with $Df^m(x)|F_x$. Then the idea is to produce a perturbation $g$ supported in the tower $U\sqcup\dots\sqcup f^{m-1}U$, where $U$ is a small neighborhood of $x$, such that $Dg^m(y)$ sends $E(y)$ into $F(y)$ for ``many" points $y$ in $U$. Many here means a large amount in the sense of measure. \par
This perturbation is performed all over the manifold, so that the expansion rates of $E$ and $F$ get mixed, and this causes $LE_p$ fall down abruptly, so a discontinuity point is created. Up to here this is also the same general strategy followed in \cite{bochi_viana2005} for the conservative case. \par
The novelty here is that in one of the cases, the perturbation methods in \cite{bochi_viana2005} do not apply to the symplectic setting. In this case, the perturbation is supported in a domain $f(D)=g(D)$. The domain is then mostly covered with small disjoint boxes, in which one assumes that the direction of $Dg(x)v_E$ is constant, where $v_E$ is a vector in $E$. One wants to perturb the angles so that $v_E$ falls in $F$ for ``many" $y$ in $D$, from the point of view of measure. The idea is to look at the angles as {\em random variables}, which are independent and identically distributed. He produces a construction by successively subdividing the boxes $D_i$ that gives a {\em random walk} on the real line. This random walk eventually gets close to the angle $\pm\frac{\pi}2$ (the angle he needs to get an $E$-vector into $F$ in that particular case). When this happens, he produces a further perturbation so that the angle becomes $\pm\frac{\pi}2$, and then this orbit is no longer perturbed. That is, he produces a random walk with absorbing barriers at $\pm\frac{\pi}2$. Eventually, for the majority of orbits the vector $v_E$ will fall in $F$, and the procedure follows as in the previous case.\newline\par
In short, what Theorem \ref{teo.symplectic.bochi.2} says is that we can decompose the manifold $M$, modulo a zero-measure set, into a countable collection of sets $Z\cup \bigcup_{n\geq 1} D_n$, where $Z$ is the set where all Lyapunov exponents vanish, and $D_n$ is the set of point where the Oseledets splitting is $n$-dominated. One would like to obtain a generic dichotomy such as the one stated in Conjecture \ref{conjecture.symplectic}, that is generically, either $m(Z)=1$ or else, $f$ is ergodic and $m(D_n)=1$ for some $n$. In that case we have that $f$ is partially hyperbolic. \par
There are some situations where this conjecture can be answered. For instance, in case $f$ is ergodic. Indeed, all these sets are invariant and, being countable, one of them has to have positive measure. In this case, we would obtain either all Lyapunov exponents are zero almost everywhere, or else $f$ is partially hyperbolic. \par
Other situation is when a generic $f$ has some $D_n$ with non-empty interior. Indeed, by a result of Arnaud, Bonatti and Crovisier \cite{arnaud_bonatti_crovisier2005}, the generic symplectomorphism is transitive; hence, the presence of a $D_n$ with non-empty interior will imply the existence of a global $n$-dominated splitting, which in the symplectic setting implies global partial hyperbolicity. \par
An intermediate situation is when a generic $f$ is {\em weakly ergodic}. Weak ergodicity is stronger than transitivity and weaker than ergodicity.
\begin{definition}\label{def.weak.ergodicity}
A volume preserving diffeomorphism is {\em weakly ergodic} if almost every orbit is dense. Equivalently, if any invariant set with positive measure is dense. It is also equivalent to the fact that every non-empty open invariant set has total measure.
\end{definition}
In the case of a generic weakly ergodic symplectomorphism, if it exists, either $m(Z)=1$ or else one of the $D_n$ is dense, hence we have partial hyperbolicity, and, as we shall see below, ergodicity.\newline\par
Now, as stated above, in case that the generic symplectomorphism is {\em partially hyperbolic}, we can say more. Note that the subset $\ph^1_\omega(M)$ of partially hyperbolic diffeomorphisms in $\diff^1_\omega(M)$, is an open set. Indeed, in the set of volume preserving partially hyperbolic diffeomorphisms, the hypothesis of accessibility together with some smoothness implies weak ergodicity. A partially hyperbolic diffeomorphism has the {\em accessibility property} if the only set that is simultaneously saturated by stable leaves and unstable leaves is the whole manifold. We have the following, proved by Brin in the seventies:
\begin{theorem}\cite{brin1975}\label{teo.brin.weak.ergodicity} Any volume preserving diffeomorphism in $\diff^{1+\alpha}_m(M)$ having the accessibility property is weakly ergodic.
\end{theorem}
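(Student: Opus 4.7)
The plan is to follow the classical Hopf-style argument, adapted to the partially hyperbolic setting by Brin, whose key ingredients are the absolute continuity of the strong stable and unstable foliations (which is the reason for the $C^{1+\alpha}$ hypothesis) together with accessibility. Recall that weak ergodicity is equivalent to the assertion that every non-empty open $f$-invariant set has total $m$-measure, and this is what I would establish.

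The first step is to invoke the absolute continuity of the strong stable and unstable foliations $W^s$ and $W^u$ of the partially hyperbolic diffeomorphism $f$, available for $f\in\diff^{1+\alpha}_m(M)$ by the classical Anosov--Brin--Pesin theory. Combining absolute continuity with the Lebesgue density theorem, one obtains the standard consequence: for any $f$-invariant measurable set $A$ with $m(A)>0$, there is a subset $\tilde A\subset A$ of full $m$-measure such that for every $x\in\tilde A$ the leaves $W^s(x)$ and $W^u(x)$ meet $A$ in subsets of full conditional (Riemannian) measure. In other words, modulo a null set, $A$ is saturated by both $W^s$ and $W^u$.

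Next, let $U$ be a non-empty open $f$-invariant set and assume for a contradiction that its (closed, invariant) complement $V=M\setminus U$ has $m(V)>0$. Applying the previous paragraph to $V$ yields $\tilde V\subset V$ as above; similarly for $U$ one gets $\tilde U\subset U$. Pick a density point $y_0\in\tilde U$ and a density point $x_0\in\tilde V$. By accessibility, there exists a finite $su$-path
\[
x_0=z_0,\ z_1,\ \dots,\ z_k=y_0,
\]
each consecutive pair lying on a common leaf of $W^s$ or $W^u$. Using the absolute continuity of the holonomies between small transversals along each leg, one propagates the property \emph{``density point of $V$''} from $z_0$ along the $su$-path: since holonomies send positive-measure sets to positive-measure sets, after $k$ steps one obtains a positive-measure subset of a small transversal at $z_k=y_0$ consisting of density points of $V$. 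This contradicts the fact that $y_0$ is an interior point of $U$, so that a full neighborhood of $y_0$ lies entirely in $U$ and hence consists of density points of $U$. Therefore $m(V)=0$, as required.

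The main obstacle is precisely the absolute continuity of the $su$-holonomies: this non-trivial analytic input is what forces the $C^{1+\alpha}$ hypothesis, and it fails in general at the $C^1$ level. Once that ingredient is at hand, the remainder is a routine \emph{Hopf chain} of positive-measure transfers along the finitely many legs of an accessibility path, where the finiteness of the path length is essential in order to preserve positivity of measure through the composition of holonomies.
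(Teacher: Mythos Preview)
The paper does not supply a proof of this theorem: it is quoted as a result of Brin \cite{brin1975} and used as a black box. There is therefore nothing in the paper to compare your argument against.

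Your outline is the standard Hopf--Brin approach and is essentially correct: invoke absolute continuity of the strong foliations (this is where $C^{1+\alpha}$ enters), show that an invariant set of positive measure is essentially $s$- and $u$-saturated, and then push positive measure along an $su$-path from a density point of $V=M\setminus U$ to an interior point of $U$ to reach a contradiction. The one place where your write-up is too quick is the propagation step. Saying that ``holonomies send positive-measure sets to positive-measure sets'' is true, but you also need to know that the image still lies in $V$ (or in the set of its density points); essential saturation gives this only modulo a null set, and the accessibility path passes through \emph{specific} points $z_i$ which need not be density points of $V$. The usual remedy is to work not with the set $V$ directly but with the Birkhoff average of a continuous function supported in $U$: this average vanishes identically on $V$, is literally constant along $W^s$-leaves (forward average) and along $W^u$-leaves (backward average) wherever defined, and the two averages agree $m$-a.e.; one then compares essential upper and lower envelopes, which are genuinely $su$-invariant and hence constant by accessibility. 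Alternatively, one thickens the transversals to small open foliation boxes and uses the Fubini-type disintegration coming from absolute continuity to carry a \emph{positive-ambient-measure} portion of $V$ (rather than a single density point) from one leg to the next. Either refinement closes the gap; your sketch as written assumes this step rather than proving it.
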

Note that there is no symplectic assumptions here. But as Dolgopyat and Wilkinson showed, the set of partially hyperbolic diffeomorphisms having the accessibility property are abundant both in the conservative and in the symplectic setting:
\begin{theorem}\label{teo.dolgopyat.wilkinson}\cite{dolgopyat_wilkinson2003} The set of partially hyperbolic diffeomorphisms, either conservative or symplectic contains an open and dense set of diffeomorphisms having the accessibility property.
\end{theorem}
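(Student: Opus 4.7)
The plan is to treat openness and density separately. For openness, accessibility of a partially hyperbolic diffeomorphism $f\in\ph^1_\omega(M)$ (or $\ph^1_m(M)$) is detected by a finite family of short $su$-cycles, that is, alternating concatenations of strong stable and strong unstable arcs whose transverse intersections inside the center plaques are $C^1$-robust. Since the strong stable and strong unstable foliations $W^s,W^u$ of a $C^1$ partially hyperbolic diffeomorphism vary continuously on compact pieces under $C^1$ perturbation of $f$, each such cycle persists for every $g$ in a $C^1$-neighborhood of $f$. A compactness argument on $M$ then upgrades the existence of a finite covering family of cycles for $f$ to a covering family for every nearby $g$, which gives openness of accessibility in both settings.

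For density, fix $f\in\ph^1_\omega(M)$ together with a hyperbolic periodic point $p$ of $f$ (which admits a continuation $p_g$ under perturbation), and denote by $AC_f(p)$ its accessibility class. If $AC_f(p)=M$ we are done. Otherwise, the goal is to produce, for each $\varepsilon>0$, a form-preserving $C^1$-perturbation $g$ of $f$ with $\|g-f\|_{C^1}<\varepsilon$ whose class $AC_g(p_g)$ is strictly larger. The key local tool is a perturbation supported in a small domain $U$ chosen so that $U,f(U),\dots,f^{N-1}(U)$ are pairwise disjoint, which translates the $su$-holonomy around a closed $su$-loop based at a boundary point of $AC_f(p)$ by an essentially arbitrary small vector in the center direction. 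In the conservative setting this perturbation is realised as the time-one map of a flow generated by a compactly supported divergence-free vector field; in the symplectic setting, by the time-one map of a compactly supported Hamiltonian. Applying this move along suitable $su$-loops creates a new $su$-connection from $p_g$ to a target point lying off $AC_f(p)$; a Baire argument over a countable dense set of target points, applied within the open set $\ph^1_\omega(M)$, then upgrades this to density.

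The main obstacle is the holonomy-move of the second paragraph: one must modify the center holonomy of an $su$-loop by a $C^1$-small, form-preserving perturbation while keeping $W^s$ and $W^u$ under enough control to identify the new accessibility class. When $\dim E^c=1$ one is moving a single real parameter and the construction is essentially explicit. In the symplectic case $\dim E^c$ is necessarily even, since $\omega$ restricts non-degenerately to $E^c$, so $\dim E^c\geq 2$ is the generic situation; here one must exclude the degenerate possibility that $AC_f(p)$ is a ``thin'' immersed submanifold along which every available holonomy move happens to be tangent. This is handled in \cite{dolgopyat_wilkinson2003} by writing down an explicit local model of the center holonomy in a neighborhood of an $su$-loop and verifying a non-degeneracy condition that exploits the full transverse freedom of Hamiltonian or divergence-free perturbations supported in the tower $U,\dots,f^{N-1}(U)$. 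Combined with the openness argument, the union of all such density statements yields the claimed open and dense subset of $\ph^1_\omega(M)$ and $\ph^1_m(M)$.
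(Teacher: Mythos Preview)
The paper does not give a proof of this theorem; it is quoted as a result from \cite{dolgopyat_wilkinson2003} and used as a black box. So there is no ``paper's own proof'' to compare against, and your proposal is really a sketch of the original Dolgopyat--Wilkinson argument rather than a comparison target.

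That said, your sketch has a genuine gap in the openness part. You treat the statement as ``accessibility is $C^1$-open'' plus ``accessibility is $C^1$-dense'', and argue openness by saying that accessibility is witnessed by a finite family of $su$-cycles which persist under perturbation via a compactness argument. This is not known to work: for a merely accessible $f$, there is no a priori bound on the number of legs in the $su$-paths joining pairs of points, so compactness of $M$ alone does not yield a finite, uniformly robust family of cycles. Indeed, whether accessibility itself is $C^1$-open among all partially hyperbolic diffeomorphisms is not established in general (it is known for one-dimensional center by \cite{rhrhu2008}, but that is a separate and much harder theorem). What \cite{dolgopyat_wilkinson2003} actually prove is that \emph{stable} accessibility is $C^1$-dense: they build, by perturbation, a specific finite configuration of $su$-paths whose transversality in the center direction is verified explicitly, and this configuration is what is robust. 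The open-and-dense set in the statement is then the union of the $C^1$-neighborhoods on which these robust configurations survive, not the set of all accessible maps.

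Your density paragraph is closer in spirit to the actual argument, but the Baire scheme you outline (enlarge the accessibility class of a periodic point step by step towards a countable dense set of targets) does not by itself produce \emph{stable} accessibility at the limit; one needs the quantitative transversality mechanism of \cite{dolgopyat_wilkinson2003} to ensure that the final map lies in an open set of accessible diffeomorphisms. In short, the correct logical structure is ``density of stable accessibility'' (a single statement), not ``openness of accessibility'' followed by ``density of accessibility''.
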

On the other hand, as we can see for instance in \cite{bochi2010}, weak ergodicity forms a $G_\delta$ set of diffeomorphisms, either in $\diff^1_m(M)$ or in $\diff^1_\omega(M)$. In fact, it is not difficult to check this, it suffices to see the set of weakly ergodic diffeomorphisms as the countable intersection of the open sets. See also \cite{bochi2010}:
\begin{proposition}\label{proposition.jana.weak.ergodicity.gdelta}
Weakly ergodic diffeomorphisms form a $G_\delta$ set of $\diff^1_\omega(M)$ (and $\diff^1_m(M)$)
\end{proposition}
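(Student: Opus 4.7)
The plan is to exhibit the set of weakly ergodic diffeomorphisms as a countable intersection of open sets in $\diff^1_\omega(M)$; the argument for $\diff^1_m(M)$ is identical. Fix once and for all a countable basis $\{U_n\}_{n\in\N}$ for the topology of $M$. The starting observation is that $f$ is weakly ergodic if and only if
\begin{equation*}
\mu\Bigl(\bigcup_{k\in\Z}f^{-k}(U_n)\Bigr)=1\qquad\text{for every }n\in\N,
\end{equation*}
since $\mu$-almost every orbit is dense if and only if, for each basis element $U_n$, $\mu$-almost every point eventually lands in $U_n$; one direction is obvious and the other follows by intersecting countably many full-measure sets.

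I would then approximate each invariant saturation by the finite union $A_{n,N}(f):=\bigcup_{|k|\leq N}f^{-k}(U_n)$. Because $\mu\bigl(\bigcup_{k\in\Z} f^{-k}(U_n)\bigr)=\lim_N\mu(A_{n,N}(f))$, the characterization above rewrites as
\begin{equation*}
\{f\text{ weakly ergodic}\}=\bigcap_{n,j\in\N}\bigcup_{N\in\N}W_{n,N,j},\quad W_{n,N,j}:=\bigl\{f:\mu(A_{n,N}(f))>1-\tfrac{1}{j}\bigr\}.
\end{equation*}
This displays weak ergodicity as a $G_\delta$, provided each $W_{n,N,j}$ is open.

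The substantive step is therefore to verify the lower semicontinuity of $f\mapsto\mu(A_{n,N}(f))$, which I would do by a compact exhaustion. Given $f\in W_{n,N,j}$, use inner regularity of $\mu$ on the open set $A_{n,N}(f)$ to select a compact $K\subset A_{n,N}(f)$ with $\mu(K)>1-1/j$. For each $x\in K$ pick $k_x$ with $|k_x|\leq N$ and $f^{k_x}(x)\in U_n$; using that $U_n$ is open and $f^{k_x}$ is continuous at $x$, extract radii $r_x,\rho_x>0$ with $f^{k_x}(B(x,\rho_x))\subset B(f^{k_x}(x),r_x)\subset U_n$. A finite subcover $\{B(x_i,\rho_{x_i})\}_{i=1}^m$ of $K$ then singles out a $C^1$-neighborhood of $f$ (namely, one ensuring $d_{C^0}(g^{k_{x_i}},f^{k_{x_i}})<r_{x_i}$ for each $i$) inside which $K\subset A_{n,N}(g)$, so that $g\in W_{n,N,j}$.

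The main obstacle I anticipate is precisely this uniformity step: one has to convert the pointwise conditions $f^{k_x}(x)\in U_n$ into a statement stable under small $C^1$ perturbations of $f$, and this is exactly where the compactness of $K$ is used to make the choice of exponents $k_x$ and of the continuity radii uniform. Note that neither the symplectic nor the smooth volume-preserving structure enters the argument: only the continuity of $f\mapsto f^k$ in the $C^1$ topology for bounded $k$, which is why the same proof applies verbatim in both $\diff^1_\omega(M)$ and $\diff^1_m(M)$.
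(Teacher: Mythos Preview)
Your proposal is correct and follows essentially the same approach as the paper: both fix a countable basis, characterize weak ergodicity by the condition that each basis element has full-measure orbit saturation, and express this as a countable intersection of the open sets $\{f:\mu(V^f)>1-\tfrac{1}{j}\}$. The only difference is one of detail: the paper simply asserts that $f\mapsto m\bigl(\bigcup_{|k|\le N}f^k(V)\bigr)$ is continuous and hence that the superlevel sets are open, whereas you unpack this by writing $\bigcup_N W_{n,N,j}$ and proving openness of each $W_{n,N,j}$ via the compact-exhaustion argument, which establishes the needed lower semicontinuity directly.
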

\begin{proof}
For each open set $V$, let us denote by $V^f$ the set $\bigcup_{n\in\Z}f^n(V)$. Note that the assignment $f\mapsto m(V^f)$ is lower-semicontinuous, since $m(V^f)$ is the supremum of the continuous functions $f\mapsto m(\bigcup_{|n|\leq N}f^n(V))$. This implies that the sets of diffeomorphisms ${\mathcal O}_{V,n}$ formed by all the diffeomorphisms for which $m(V^f)>1-\frac1n$ are open for each fixed open set $V$ and positive integer $n$. Now it is easy to see that the set of weakly ergodic diffeomorphisms coincides with the countable intersection of ${\mathcal O}_{V,n}$, where $V$ varies over a countable base of open sets, and $n$ over the positive integers. 
\end{proof}
Note that this, together with Theorems \ref{teo.brin.weak.ergodicity} and \ref{teo.dolgopyat.wilkinson}, implies that:
\begin{proposition}\label{proposition.weak.ergodicity.generic}
Weak ergodicity is generic among conservative and symplectic partially hyperbolic diffeomorphisms.
\end{proposition}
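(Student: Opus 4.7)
The plan is to combine the three ingredients just assembled: the $G_\delta$ property of weak ergodicity from Proposition \ref{proposition.jana.weak.ergodicity.gdelta}, the $C^1$-density of accessibility from Theorem \ref{teo.dolgopyat.wilkinson}, and Brin's criterion from Theorem \ref{teo.brin.weak.ergodicity}. Since partial hyperbolicity is a $C^1$-open condition, it suffices to exhibit the weakly ergodic diffeomorphisms as the intersection of a $G_\delta$ set with a $C^1$-dense subset of the partially hyperbolic ones, in $\diff^1_m(M)$ and $\diff^1_\omega(M)$ respectively.

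The $G_\delta$ part comes for free from Proposition \ref{proposition.jana.weak.ergodicity.gdelta}, so everything reduces to showing $C^1$-density. Fix $f$ partially hyperbolic, conservative (or symplectic). First I would apply Theorem \ref{teo.dolgopyat.wilkinson} to produce a $C^1$-small perturbation $g$ of $f$ that is still partially hyperbolic and has the accessibility property. Next I would approximate $g$ in the $C^1$-topology by a diffeomorphism $h$ of class $C^{1+\alpha}$ in the appropriate category, using Avila's smoothing theorem \cite{avila2008} in the conservative case and Zehnder's analogue \cite{zehnder1977} in the symplectic case. Because accessibility is $C^1$-open among partially hyperbolic diffeomorphisms, $h$ automatically inherits partial hyperbolicity and accessibility from $g$, provided the $C^1$-approximation is close enough. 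Brin's theorem then applies to $h$ and yields weak ergodicity, while $h$ is $C^1$-close to $f$ by construction; this establishes the desired density.

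The hard part is the interplay of regularities. Brin's criterion genuinely requires $C^{1+\alpha}$ smoothness, while the ambient space is only $C^1$ and the $C^{1+\alpha}$ diffeomorphisms form a merely dense subset, not a residual one. The saving observation is that accessibility is $C^1$-open in the partially hyperbolic setting, so the natural order is to first fix accessibility by Dolgopyat--Wilkinson and only then smooth by Avila--Zehnder, rather than the reverse. Once this ordering is in place, the remaining steps assemble mechanically: intersecting the dense set of weakly ergodic diffeomorphisms so produced with the $G_\delta$ set of Proposition \ref{proposition.jana.weak.ergodicity.gdelta} yields a residual subset inside the partially hyperbolic open set, both in $\diff^1_m(M)$ and in $\diff^1_\omega(M)$.
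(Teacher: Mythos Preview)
Your proof is correct and follows exactly the paper's approach, which presents the proposition as an immediate consequence of Proposition~\ref{proposition.jana.weak.ergodicity.gdelta} together with Theorems~\ref{teo.brin.weak.ergodicity} and~\ref{teo.dolgopyat.wilkinson}; you even make explicit the smoothing step via \cite{avila2008,zehnder1977} that the paper leaves tacit. One small imprecision: accessibility itself is not known to be $C^1$-open, only to \emph{contain} a $C^1$-open and dense set (the stably accessible systems, which is what Dolgopyat--Wilkinson actually produce), so take $g$ stably accessible and your argument goes through verbatim.
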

This fact was used by Bochi to prove that:
\begin{theorem}\cite{bochi2010} For a generic symplectomorphism in $\ph^1_\omega(M)$, there is a partially hyperbolic splitting $TM=E^u\oplus E^c\oplus E^s$, such that all Lyapunov exponents in the center bundle are zero almost everywhere.
\end{theorem}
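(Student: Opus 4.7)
The plan is to intersect two residual subsets of $\ph^1_\omega(M)$ and derive the conclusion for every diffeomorphism in the intersection. Let $2n=\dim M$ and $u=\dim E^u=\dim E^s$. Let $\mathcal G\subset\ph^1_\omega(M)$ consist of those diffeomorphisms that are common continuity points of $\le_p\colon\ph^1_\omega(M)\to\R$ for $p=1,\dots,n$ (a residual condition, since each $\le_p$ is upper semicontinuous) and that are also weakly ergodic (residual by Proposition \ref{proposition.weak.ergodicity.generic}). The goal is to show that every $f\in\mathcal G$ has all center Lyapunov exponents zero almost everywhere.

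Fix $f\in\mathcal G$. By Theorem \ref{teo.symplectic.bochi.2}, for $\mu$-almost every $x$ the Oseledets splitting along the orbit of $x$ is either trivial or dominated. Partial hyperbolicity rules out the trivial case, since $E^u$ already contributes strictly positive exponents at every regular point. For each $k\geq 1$ let $D_k$ be the $f$-invariant set of regular points whose Oseledets splitting along the orbit is $k$-dominated; then $\mu(\bigcup_k D_k)=1$, so by weak ergodicity there exists $k_0$ with $\mu(D_{k_0})=1$. A second application of weak ergodicity (to the measurable partition of $D_{k_0}$ by dimension configuration of the Oseledets bundles) fixes the dimensions on a full-measure subset. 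Since any full-measure set is dense, the uniform $k_0$-dominated splitting on it extends by continuity to a global dominated splitting $TM=F_1\oplus\cdots\oplus F_r$ refining $E^u\oplus E^c\oplus E^s$.

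It now suffices to show that every $F_i$ contained in $E^c$ carries only the zero Lyapunov exponent. The plan here is proof by contradiction using continuity of the integrated exponents. Suppose some $F_{i_0}\subset E^c$ has strictly positive exponents on a positive-measure set; by the symplectic symmetry of the spectrum, a dual $F_{j_0}\subset E^c$ then has strictly negative exponents on the same set. Using Bochi's symplectic perturbation scheme (the random-walk argument outlined just before Theorem \ref{teo.symplectic.bochi.2}), one would construct $g\in\diff^1_\omega(M)$ arbitrarily $C^1$-close to $f$ that mixes the directions of $F_{i_0}$ and $F_{j_0}$ over a set of non-negligible measure. Taking $p_0$ equal to $u$ plus the total dimension of the positive-exponent center bundles up to and including $F_{i_0}$, this mixing would force a definite drop $\le_{p_0}(g)\leq\le_{p_0}(f)-\delta$ for some $\delta>0$ depending on the excess positive center exponent but not on the size of the perturbation. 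This contradicts the continuity of $\le_{p_0}$ at $f$, so all center Oseledets exponents must vanish almost everywhere.

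The hard part is the last perturbation step: standard Oseledets-mixing perturbations as used by Bochi and Viana \cite{bochi_viana2005} need not preserve $\omega$ in the critical case, which is why Bochi's replacement by the random-walk scheme with absorbing barriers at $\pm\pi/2$ is indispensable. One must also verify that the perturbation does not destroy the ambient dominated splitting $F_1\oplus\cdots\oplus F_r$, which is automatic for sufficiently $C^1$-small perturbations by the openness of $k_0$-domination, provided the perturbation is supported in a thin tower of small total measure.
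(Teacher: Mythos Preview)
The paper does not give its own proof of this theorem; it is a survey that cites \cite{bochi2010} and only records that the extra ingredient, beyond Theorem~\ref{teo.symplectic.bochi.2}, is the genericity of weak ergodicity in $\ph^1_\omega(M)$ (Proposition~\ref{proposition.weak.ergodicity.generic}). So I evaluate your sketch directly. A first minor slip: weak ergodicity does \emph{not} give $\mu(D_{k_0})=1$, nor a full-measure dimension configuration; it only says a positive-measure invariant set is dense. Denseness, not full measure, is what one actually uses to extend a $k_0$-dominated splitting from a dense invariant set to all of $M$, so this step is repairable once rephrased.

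The genuine gap is the contradiction step. Having just established that $F_1\oplus\cdots\oplus F_r$ is a \emph{global dominated} splitting, you propose to perturb $f$ so as to mix $F_{i_0}$ with its symplectic partner $F_{j_0}$ and force a drop in $\le_{p_0}$. But domination is $C^1$-open: every $g$ close to $f$ carries a nearby dominated splitting $F_1^g\oplus\cdots\oplus F_r^g$, and $Dg^m$ can never push vectors from $F_{i_0}^g$ toward $F_{j_0}^g$. The whole point of the Bochi--Viana and random-walk perturbation schemes is that they exploit the \emph{failure} of domination along some long orbit segment; once domination holds globally they have no leverage. Your own last paragraph asks the perturbation to preserve the dominated splitting while simultaneously mixing two of its summands---these two demands are incompatible. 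The idea you are missing is the one the survey stresses repeatedly: in the symplectic setting, any dominated splitting is automatically partially hyperbolic. So once a global dominated refinement of $E^u\oplus E^c\oplus E^s$ is in hand, the outer sub-bundles are forced to be uniformly hyperbolic and can be absorbed into $E^u$ and $E^s$, leaving a (possibly smaller) center on which only the zero exponent survives. The conclusion comes from this symplectic rigidity, not from a second application of the perturbation lemma.
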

That is, the Oseledets splitting $E^+\oplus E^0\oplus E^-$ defined in Subsection \ref{subsection.symplectic} extends globally to a dominated (partially hyperbolic) splitting. In particular, this implies that generically partially hyperbolic symplectomorphisms satisfy a property called {\em non-uniform center bunching} condition. This property has been shown by Avila, Bochi and Wilkinson to be enough to obtain ergodicity in this context:
\begin{theorem}\cite{avila_bochi_wilkinson2009} A generic partially hyperbolic symplectomorphism in $\ph^1_\omega(M)$ is ergodic.
\end{theorem}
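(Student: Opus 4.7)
The plan is to assemble three $C^1$-generic ingredients already established in the excerpt and then apply the Avila--Bochi--Wilkinson ergodicity criterion. First, by Theorem \ref{teo.dolgopyat.wilkinson}, accessibility holds on a $C^1$-open and dense subset $\mathcal{A}\subset\ph^1_\omega(M)$. Second, by the preceding theorem of Bochi, there is a residual set $\mathcal{B}\subset\ph^1_\omega(M)$ on which the Oseledets splitting extends globally to a partially hyperbolic splitting $TM=E^u\oplus E^c\oplus E^s$ with all center Lyapunov exponents vanishing $\mu$-almost everywhere; this is exactly the \emph{non-uniform center bunching} condition. Third, by Avila's regularization theorem, the $C^\infty$ symplectomorphisms form a $C^1$-dense subset $\mathcal{S}\subset\diff^1_\omega(M)$.

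The strategy is to show that $\mathcal{A}\cap\mathcal{B}$ is contained, up to intersecting with further generic conditions, in the set of ergodic diffeomorphisms. The Avila--Bochi--Wilkinson criterion asserts that a $C^{1+\alpha}$ partially hyperbolic symplectomorphism enjoying both accessibility and non-uniform center bunching is ergodic; this result applies immediately to any smooth member of $\mathcal{A}\cap\mathcal{B}\cap\mathcal{S}$. Combined with the generic weak ergodicity supplied by Proposition \ref{proposition.weak.ergodicity.generic}, and with the fact that accessibility is $C^1$-open, one obtains a $C^1$-dense reservoir of ergodic diffeomorphisms on which to run a Baire-type argument.

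The main obstacle is the regularity gap between $C^1$-genericity and the $C^{1+\alpha}$-hypothesis of the Avila--Bochi--Wilkinson criterion: ergodicity is not $C^1$-closed, so it cannot be transferred from smooth approximants to their $C^1$-limit by continuity alone. I would resolve this by expressing ergodicity, under the standing background of global partial hyperbolicity and accessibility, as a countable intersection of $C^1$-open conditions; concretely, by requiring that for each member $V_k$ of a countable basis of open sets the quantity $m(V_k^f)$ exceed $1-1/n$, and that the Birkhoff averages of a countable dense family of continuous observables concentrate around their spatial means up to prescribed error. Denseness of each such open condition is then supplied by combining Avila's smooth approximation with the $C^{1+\alpha}$ ergodicity criterion applied to the smoothed diffeomorphism, and Baire's theorem produces the residual ergodic set inside $\ph^1_\omega(M)$.
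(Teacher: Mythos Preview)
The survey itself does not supply a proof of this theorem; it merely cites \cite{avila_bochi_wilkinson2009} and sketches the conceptual route in the sentence preceding the statement: Bochi's theorem yields generic vanishing of the center exponents (hence non-uniform center bunching), and the Avila--Bochi--Wilkinson criterion then gives ergodicity. Your proposal assembles exactly these ingredients and, going beyond the survey, correctly isolates the regularity gap between the $C^{1+\alpha}$ hypothesis of the ergodicity criterion and the desired $C^1$-generic conclusion.

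Your resolution of that gap, however, contains a genuine flaw. You claim that denseness of ergodic maps follows by smoothing an arbitrary $f\in\ph^1_\omega(M)$ via Avila's theorem and then applying the $C^{1+\alpha}$ criterion to the smoothed map. But the smoothed approximant has no reason to satisfy non-uniform center bunching: vanishing of all center Lyapunov exponents is a \emph{residual}, not open, property---it is obtained precisely as the set of continuity points of the upper-semicontinuous functionals $\le_p$---and an arbitrarily small $C^1$-perturbation (such as Avila's regularization) can destroy it. Without center bunching you cannot invoke the Avila--Bochi--Wilkinson criterion for the $C^2$ map, so your denseness step does not go through as written. (The $G_\delta$ half of your argument is fine and is classical, cf.\ \cite{oxtoby_ulam1941}; incidentally, no partial-hyperbolicity or accessibility hypotheses are needed for ergodicity to be $G_\delta$.) The actual passage from the $C^{1+\alpha}$ criterion to $C^1$-genericity in \cite{avila_bochi_wilkinson2009} requires more: one must arrange that the $C^2$ approximant itself satisfies an adequate bunching condition, which involves working at continuity points of the integrated exponents so that the $C^2$ approximant inherits near-zero center exponents, rather than appealing to naive smoothing alone.
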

\par
Finally, let us mention another result in the direction of Conjecture \ref{conjecture.symplectic}, obtained by Saghin and Xia:
\begin{theorem}\cite{saghin_xia2006} For a residual set of diffeomorphisms $f$ in $\diff^1_\omega(M)$, either one of the following holds:
\begin{enumerate}
\item $f$ is partially hyperbolic
\item the set of elliptic $f$-periodic points is dense in $M$.
\end{enumerate}
\end{theorem}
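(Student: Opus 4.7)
The plan is to express the alternative as a countable intersection of dense $C^1$-open properties. Choose a countable basis $\{V_k\}_{k\in\N}$ of open subsets of $M$, and let $\ph\subset\diff^1_\omega(M)$ denote the open set of partially hyperbolic symplectomorphisms. For each $k,n\geq 1$ let
$$\mathcal{U}_{k,n}=\ph\cup\bigl\{f:f\text{ has a periodic point }p\in V_k\text{ with all eigenvalues of }Df^{\mathrm{per}(p)}(p)\text{ within }1/n\text{ of the unit circle}\bigr\}.$$
A periodic point whose return map has no eigenvalue equal to $1$ persists under $C^1$-small perturbations and its spectrum varies continuously, so the second condition defines a $C^1$-open set; hence $\mathcal{U}_{k,n}$ is open. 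If each $\mathcal{U}_{k,n}$ is shown to be dense, then $\bigcap_{k,n}\mathcal{U}_{k,n}$ is residual, and intersecting it with the residual set of Kupka--Smale symplectomorphisms promotes ``spectrum near the circle'' to ``spectrum on the circle'' by eliminating neutral eigenvalues, yielding the stated dichotomy.

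For density, fix $f\notin\ph$ and an open set $V_k$; one must produce a symplectic perturbation of $f$ possessing a near-elliptic periodic point in $V_k$. First, using the genericity of transitivity for symplectomorphisms \cite{arnaud_bonatti_crovisier2005}, one may assume $V_k$ meets the non-wandering set of $f$; Pugh's closing lemma in the symplectic category then yields a perturbation $f_1$ with a periodic orbit $\gamma=\{p,f_1(p),\dots,f_1^{m-1}(p)\}$ through $V_k$, whose period $m$ can be taken arbitrarily large. Second, the symplectic version of Franks' lemma (Horita--Tahzibi) realizes, by an arbitrarily $C^1$-small perturbation supported in a thin tube around $\gamma$, any sequence of symplectic linear maps $(L_i)_{i=0}^{m-1}$ sufficiently close to $\bigl(Df_1(f_1^i p)\bigr)_{i=0}^{m-1}$ as the new derivatives along $\gamma$. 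This reduces the problem to a purely linear one: produce such a perturbed symplectic cocycle whose period-$m$ product has all eigenvalues within $1/n$ of the unit circle.

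The main obstacle, and the core of the proof, is this last step, which must draw on the hypothesis $f\notin\ph$. Since $f$ admits no dominated splitting of any index on $M$, no uniform dominated splitting persists on $\gamma$ either for $m$ chosen large enough; concretely, for each candidate index $q$ and domination strength $N$, some iterate along $\gamma$ exhibits a small angle between the $q$-th and $(q+1)$-st singular-value directions of the cocycle. In the spirit of the Bochi--Viana perturbation technique underlying Theorem \ref{teo.symplectic.bochi.2}, one then composes many tiny symplectic rotations in the Franks window to align successive stretching directions, forcing the singular values of the product to collapse towards each other and the eigenvalues towards the unit circle. The symplectic constraint that admissible rotations preserve $\omega$ is precisely what Bochi circumvents in \cite{bochi2010} via his random-walk-with-absorbing-barriers argument; the hard part here is to adapt that argument from its ergodic-theoretic setting to a closed orbit setting and to iterate it until every eigenvalue has been pushed within $1/n$ of the unit circle, at which point the density of each $\mathcal{U}_{k,n}$, and with it the theorem, follows.
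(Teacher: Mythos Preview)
The paper does not contain a proof of this theorem; it is merely cited from \cite{saghin_xia2006} as a result in the direction of Conjecture~\ref{conjecture.symplectic}. There is therefore no proof in the paper to compare your proposal against.

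That said, your outline has a genuine gap at the ``promotion'' step. Intersecting with the Kupka--Smale residual set does \emph{not} turn ``spectrum near the unit circle'' into ``spectrum on the unit circle''. For a symplectic return map the eigenvalues come in groups $\lambda,\lambda^{-1}$ (or $\lambda,\bar\lambda,\lambda^{-1},\bar\lambda^{-1}$), and a Kupka--Smale periodic point may perfectly well be hyperbolic with real eigenvalues $1+\varepsilon$ and $(1+\varepsilon)^{-1}$, as close to $1$ as you wish. Kupka--Smale excludes the eigenvalue $1$, but it does not push a nearby eigenvalue onto the circle. Worse, membership in $\bigcap_n\mathcal U_{k,n}$ only gives, for each $n$, \emph{some} periodic point in $V_k$ with spectrum $1/n$-close to the circle; these points may have periods tending to infinity and need not accumulate on any periodic point with spectrum actually on the circle. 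So the dichotomy does not follow from your $\mathcal U_{k,n}$.

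The way Saghin and Xia proceed is to work directly with an open elliptic condition: having at least one pair of non-real eigenvalues of modulus one that are not roots of unity (a ``$1$-elliptic'' point). The heart of their argument is a symplectic version of Ma\~n\'e's dichotomy for periodic linear cocycles: if a long periodic orbit carries no $N$-dominated splitting, then a small Franks-type perturbation of the symplectic cocycle along the orbit produces a genuine complex-conjugate pair of modulus-one eigenvalues in the product. This is a finite algebraic perturbation of a product of symplectic matrices and does not rely on Bochi's random-walk technique (which in any case appeared after \cite{saghin_xia2006}). Once a true $1$-elliptic point is manufactured in each $V_k$, the residual set follows by the usual Baire argument. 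Your scheme is salvageable if you replace ``eigenvalues within $1/n$ of the circle'' by this open elliptic condition and supply the corresponding cocycle perturbation lemma.
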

%----------------------------------------------------------------------------------------
\section{$C^1$ generic behavior of the Oseledets splitting in $\diff^1_m(M)$}\label{section.conservative}
%-----------------------------------------------------------------------------------------
Finally, let us study the generic behavior of the Oseledets splitting in $\diff^1_m(M)$, the set diffeomorphisms preserving a smooth volume form $m$. As we have mentioned in Subsection \ref{subsection.conservative}, our leading goal in this setting is establishing Conjecture \ref{conjecture.avila.bochi}, proposing that generically, only one of the following alternatives hold:\newline
Either
\begin{enumerate}
\item all Lyapunov exponents vanish almost everywhere, or
\item $f$ is non-uniformly hyperbolic and ergodic, and the Oseledets splitting extends to a globally dominated splitting. 
\end{enumerate}
This Conjecture has already been proved in dimension 2 \cite{bochi2002} and dimension 3 \cite{jrhertz2010}. Even though setting these results has not been a trivial task, there are some particularities that helped in their proofs. Let us discuss them to see the new difficulties that could arise in higher dimensions:\par
In the conservative setting, for any dimension, the Lyapunov exponents $\lambda_1(x)\geq\dots\geq\lambda_n(x)$ satisfy:
$$\lambda_1(x)+\dots+\lambda_n(x)=0\qquad m-\mbox{almost every }x$$ 
Hence, in dimension 2, the presence of a non-vanishing Lyapunov exponent in a set of positive measure, readily implies the existence of a non-trivial Pesin region $\nuh(f)$. If, furthermore, one can establish some kind of dominance over $\nuh(f)$, (the most difficult part in this case) one obtains that its closure $\overline{\nuh(f)}$ is a hyperbolic set with positive measure. But $C^1$-generically, a hyperbolic set has either zero measure or it is the whole manifold. We include below a new proof of this fact using a criterion in \cite{rhrhtu2009}, and techniques in \cite{jrhertz2010}. Note that the result holds for manifolds of any dimension:
\begin{proposition}\label{proposition.hyperbolic.sets}
For a generic conservative or symplectic diffeomorphism $f$ in $\diff^1_m(M)$ or $\diff^1_\omega(M)$, $M$ with any dimension, only one of the following holds. Either
\begin{enumerate}
\item $f$ is Anosov, or
\item all hyperbolic sets have zero measure.
\end{enumerate}
\end{proposition}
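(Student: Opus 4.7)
The plan is to exhibit a residual set on which $f\mapsto m(H_{l,k}(f))$ is continuous for all pairs $(l,k)$, and then derive a contradiction from the existence of a proper positive-measure hyperbolic set via a $C^1$-perturbation that drops the measure by a definite amount. For each pair of positive integers $l$ and $k$ with $0<k<\dim M$, define $H_{l,k}(f)\subset M$ to be the maximal compact $f$-invariant set admitting a splitting $E^s\oplus E^u$ with $\dim E^s=k$, $\|Df^l|_{E^s}\|\le 1/2$ and $\|Df^{-l}|_{E^u}\|\le 1/2$. The union $\bigcup_{l,k}H_{l,k}(f)$ is exactly the set of points lying in some compact invariant hyperbolic set of $f$, so the proposition reduces to showing that whenever $f$ fails to be Anosov one has $m(H_{l,k}(f))=0$ for every $(l,k)$.

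First I would verify that the map $f\mapsto m(H_{l,k}(f))$ is upper semicontinuous. If $f_n\to f$ in $\diff^1_m(M)$ (the symplectic case is identical) and $x\in H_{l,k}(f_n)$ for infinitely many $n$, then the corresponding invariant splittings along the orbits of $x$ under $f_n$ satisfy the $l$-hyperbolicity estimates; by compactness of the Grassmannian and a diagonal extraction along the orbit, one obtains a limit splitting along the orbit of $x$ under $f$ that remains $Df$-invariant and satisfies the same estimates, so $x\in H_{l,k}(f)$. Applying reverse Fatou gives $m(H_{l,k}(f))\ge\limsup_n m(H_{l,k}(f_n))$, so the set $\mathcal R_{l,k}$ of continuity points of $m(H_{l,k}(\cdot))$ is residual. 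Intersecting over all $(l,k)$ produces a residual set $\mathcal R$ in $\diff^1_m(M)$ (resp.\ $\diff^1_\omega(M)$).

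Fix $f\in\mathcal R$ that is not Anosov and suppose, for contradiction, that $m(H_{l,k}(f))>0$ for some $(l,k)$. Let $\Lambda=H_{l,k}(f)$, a compact proper $f$-invariant hyperbolic set. It has empty interior, since a compact invariant hyperbolic set with nonempty interior would be clopen in the connected manifold $M$ and therefore equal $M$, making $f$ Anosov. By Lebesgue density and the fact that $\Lambda$ has empty interior, pick a density point $x\in\Lambda$ lying on the topological boundary $\partial\Lambda$. The goal is to construct a conservative (resp.\ symplectic) $C^1$-perturbation $g$ of $f$ supported in a small tubular neighborhood $U$ of a finite orbit piece of $x$, such that in $g$ the stable direction $E^s$ is mapped into $E^u$ on a subset of $U$ of positive relative measure; Poincar\'e recurrence then forces these corrupted orbits to occupy positive measure in $M$, producing a uniform drop $m(H_{l,k}(g))\le m(H_{l,k}(f))-\delta$ for some $\delta>0$ independent of the perturbation size. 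This contradicts continuity of $m(H_{l,k}(\cdot))$ at $f$.

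The main obstacle is this last perturbative step: one must produce a genuinely volume-preserving (resp.\ symplectic) $C^1$-small perturbation whose effect on $H_{l,k}$ is quantitatively controlled by a definite constant $\delta$. This is precisely where the criterion in \cite{rhrhtu2009} and the tower/perturbation constructions of \cite{jrhertz2010} enter, providing local model perturbations along an orbit piece that rotate the stable bundle into the unstable bundle on a set of positive relative measure while remaining $C^1$-small. In the conservative setting these local models must be realized within $\diff^1_m(M)$ using the smooth approximation of \cite{avila2008}; in the symplectic setting one instead invokes \cite{zehnder1977}. The delicate point is propagating the local measure effect through recurrence to obtain a drop $\delta$ that does not shrink with the support of the perturbation; once this localization-plus-recurrence argument is in place, the semicontinuity contradiction closes the proof.
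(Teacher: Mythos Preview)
Your overall architecture (define maximal $(l,k)$-hyperbolic sets, use upper semicontinuity of their measure, work on the residual set of continuity points, then perturb to force a drop) is reasonable, and the semicontinuity step is fine. The problem is the perturbative step, which is the entire content of the argument.

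The mechanism you propose --- a $C^1$-small perturbation that ``rotates $E^s$ into $E^u$'' on a set of positive relative measure --- cannot work on a hyperbolic set. Hyperbolic (and more generally dominated) splittings are $C^1$-robust: for $g$ $C^1$-close to $f$ the continuation $\Lambda_g$ is hyperbolic and the bundles $E^s_g,E^u_g$ vary continuously with $g$; no small perturbation sends one into the other. The Bochi--Viana tower perturbations you are alluding to operate precisely in the \emph{absence} of domination, which is the opposite regime. The references you invoke do not supply such a perturbation either: Lemma~2.3 of \cite{rhrhtu2009} is a saturation statement (a $C^{1+\alpha}$ hyperbolic set of positive measure contains whole stable and unstable leaves, via absolute continuity), not a perturbation tool; and \cite{jrhertz2010} concerns partially hyperbolic sets in dimension~$3$. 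Even granting some local corruption on a set $A\subset U$, your recurrence argument does not give a uniform $\delta$: structural stability moves the hyperbolic set to a nearby $\Lambda_g$, and there is no reason the points of $\Lambda_g$ pass through $A$ at all, so the claimed inequality $m(H_{l,k}(g))\le m(H_{l,k}(f))-\delta$ is unsupported.

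The paper's proof is structurally different and avoids this difficulty. It first establishes the $C^{1+\alpha}$ case directly: if $f\in\diff^{1+\alpha}_m(M)$ has a hyperbolic set $\Lambda$ with $m(\Lambda)>0$, then by absolute continuity (this is where \cite{rhrhtu2009} actually enters) $\Lambda$ is saturated by its stable and unstable leaves, hence is simultaneously an attractor and a repeller, so $\Lambda=M$ and $f$ is Anosov. Then one defines closed sets $\mathcal H_{l,n}\subset\diff^1_m(M)$ of maps admitting an $l$-hyperbolic set of measure $\ge 1/n$; by Avila's regularization \cite{avila2008} (resp.\ \cite{zehnder1977} in the symplectic case) and the $C^{1+\alpha}$ fact just proved, Anosov maps are dense in the interior of each $\mathcal H_{l,n}$, and a standard Baire argument finishes. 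If you want to salvage your outline, the missing ingredient is not a bundle-rotation perturbation but precisely this $C^{1+\alpha}$ saturation result combined with $C^2$-density.
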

Let us observe that in \cite{bochi_viana2004}, the authors prove that for every $C^{1+\alpha}$ non-Anosov conservative diffeomorphism, all hyperbolic sets (not only the isolated ones) have measure zero. We provide here a much simpler proof of this fact based in \cite{rhrhtu2009}. Also, since $C^{1+\alpha}$ symplectomorphisms are dense in $\diff^1_\omega(M)$, the authors state the result immediately follows for generic $f\in\diff^1_\omega(M)$. However, since the measure of the hyperbolic sets varies only upper-semicontinuous, and we are not necessarily dealing with isolated hyperbolic sets, the conclusion is not so apparent. We also provide here a complete proof.
\begin{proof}
First of all, let us prove any $C^{1+\alpha}$ conservative diffeomorphisms, having a hyperbolic set with positive measure is Anosov. Let $f\in\diff^1_m(M)$ and $\Lambda$ a hyperbolic set with positive measure. It follows from \cite[Lemma 2.3]{rhrhtu2009} applied to the characteristic function of $\Lambda$, that $\Lambda$ contains all its stable and unstable leaves. Therefore, it is both a hyperbolic attractor and repeller. Hence, $\Lambda=M$.\par
Now, let ${\mathcal H}_{l,n}$ the set of diffeomorphisms having a hyperbolic set $\Lambda$ of measure greater or equal than $\frac1n$ such that $\|Df^l|_{E^s_\Lambda}\|\leq \frac12$. Since Lebesgue measure is upper-semicontinuous with respect to Hausdorff distance, it is easy to see that the sets ${\mathcal H}_{l,n}$ are closed. Indeed, if $f_k$ has a hyperbolic set $\Lambda_k$ with measure greater or equal than $\frac1n$, over which $\|Df_k^l|_{E^s_\Lambda}\|\leq \frac12$, it is obvious that the limit $f$ of the sequence $f_k$ will belong to ${\mathcal H}_{l,n}$. \par
Now, due to \cite{avila2008}, and the result in the first paragraph of this proof, we have that $l$-Anosov diffeomorphisms are dense in the interior of each ${\mathcal H}_{l,n}$. Hence the closure of the interior of each ${\mathcal H}_{l,n}$ consists of Anosov diffeomorphisms. Now, generically in $\diff^1_m(M)$ or $\diff^1_\omega(M)$, if $f$ has a hyperbolic set with positive measure, then it belongs to the closure of the interior of some ${\mathcal H}_{l,n}$, and hence it is Anosov. This proves the claim.
\end{proof}
For 3 dimensional manifolds, the possibilities are more. But after \cite{avila_bochi2009} and \cite{bochi_viana2005} we can group them into three situations. Namely, generically in $\diff^3_m(M)$, we have one of the following situations:
\begin{enumerate}
\item all Lyapunov exponents vanish almost everywhere, 
\item $\nuh(f)\circeq M$, $f$ is ergodic and the zipped Oseledets splitting extends to a global dominated splitting, or
\item there is a partially hyperbolic set $\Lambda$ with $m(\Lambda)>0$
\end{enumerate}
A partially hyperbolic set is a set over which there is a partially hyperbolic splitting, see Definition \ref{definition partially hyperbolic}. Now, in \cite{jrhertz2010} the work of the author consists in showing an analogous to Proposition \ref{proposition.hyperbolic.sets} for partially hyperbolic sets, namely:
\begin{theorem}\cite{jrhertz2010}\label{teo.jana.partially.hyperbolic.sets}
Given $r\in[1,\infty]$, for a generic conservative diffeomorphism $f$ in $\diff^r_m(M^3)$, either:
\begin{enumerate}
\item $f$ is partially hyperbolic, or
\item all partially hyperbolic sets have measure zero.
\end{enumerate}
\end{theorem}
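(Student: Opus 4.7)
The plan is to follow the architecture of the proof of Proposition \ref{proposition.hyperbolic.sets} step by step, replacing ``hyperbolic'' by ``partially hyperbolic'' throughout, but exploiting the three-dimensionality at the one place where the argument is most delicate. First I would establish the higher-regularity statement: every $f\in\diff^{1+\alpha}_m(M^3)$ admitting a partially hyperbolic set $\Lambda$ with $m(\Lambda)>0$ is itself globally partially hyperbolic. The strategy is the saturation argument in the spirit of \cite{rhrhtu2009,jrhertz2010}: by absolute continuity of the strong stable and strong unstable foliations of $\Lambda$, together with a Fubini-type argument applied to the characteristic function of $\Lambda$, $\Lambda$ must (mod zero) be saturated by both $\mathcal F^{ss}$ and $\mathcal F^{uu}$. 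In dimension three, the accessibility-class argument of \cite{jrhertz2010} then forces $\Lambda$ to have full measure, after which the dominated splitting extends continuously to all of $M$ and gives global partial hyperbolicity.

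With this in hand, for each $l,n\in\N$ I would define $\mathcal P_{l,n}\subset\diff^1_m(M^3)$ as the set of diffeomorphisms admitting a compact invariant set $\Lambda_f$ with $m(\Lambda_f)\geq \frac1n$ on which the tangent bundle carries an $l$-dominated splitting $E^s\oplus E^c\oplus E^u$ (or a two-bundle variant) satisfying $\|Df^l|_{E^s}\|\leq\frac12$ and $\|Df^{-l}|_{E^u}\|\leq\frac12$. A Hausdorff-compactness argument, identical to the one used for $\mathcal H_{l,n}$ in Proposition \ref{proposition.hyperbolic.sets}, shows that $\mathcal P_{l,n}$ is closed: a Hausdorff limit of $l$-partially hyperbolic sets with fixed constants is itself $l$-partially hyperbolic, and Lebesgue measure is upper semicontinuous under Hausdorff convergence. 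Avila's smoothing theorem \cite{avila2008} together with the first step then forces globally partially hyperbolic diffeomorphisms to be dense in the interior of each $\mathcal P_{l,n}$, and since global partial hyperbolicity is a $C^1$-open condition, the interior of $\mathcal P_{l,n}$ is contained in the open set $\ph^1_m(M^3)$.

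The conclusion is the standard Baire argument: the boundary of each $\mathcal P_{l,n}$ is nowhere dense, so for a residual set of $f$ and every $(l,n)$, $f$ lies either in the interior of $\mathcal P_{l,n}$ (hence is partially hyperbolic) or in its complement. Intersecting over the countably many pairs $(l,n)$ yields the dichotomy: either $f$ is partially hyperbolic, or $f$ has no partially hyperbolic set of measure $\geq \frac1n$ for any $n$, i.e.\ all partially hyperbolic sets of $f$ have measure zero. The hard part is the $C^{1+\alpha}$ upgrade in Step 1, and it is precisely here that the three-dimensional hypothesis is essential: the one-dimensional center bundle allows the stable/unstable saturation to close up via an accessibility argument. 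In higher dimensions this step is not available, which is why Theorem \ref{teo.jana.partially.hyperbolic.sets} is stated only for $M^3$ and why the analogous statement in arbitrary dimension remains open.
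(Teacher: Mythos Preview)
Your Baire architecture in Steps~2 and~3 is the right one and parallels the proof of Proposition~\ref{proposition.hyperbolic.sets} exactly as one would hope; the paper itself does not prove Theorem~\ref{teo.jana.partially.hyperbolic.sets} but only describes its ingredients, and your reduction to a $C^{1+\alpha}$ statement plus a closed-set/smoothing/Baire argument is the natural route.

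The genuine gap is in Step~1. In the hyperbolic case, once $\Lambda$ is saturated by stable and unstable leaves the argument closes immediately because $E^s\oplus E^u=TM$, so $\Lambda$ is simultaneously an attractor and a repeller and hence equals $M$. In the partially hyperbolic case $E^{ss}\oplus E^{uu}\ne TM$: saturation of $\Lambda$ by the strong foliations leaves the center direction completely unaccounted for, and there is no direct reason why a closed, positive-measure, $su$-saturated set should be all of $M$. Invoking ``the accessibility-class argument of \cite{jrhertz2010}'' here is circular (that \emph{is} the theorem you are proving), and in any case accessibility classes are a priori defined only for \emph{globally} partially hyperbolic systems, not for partially hyperbolic subsets. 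The paper's own summary of the proof singles out a second ingredient you do not mention: one must analyze the \emph{geometry of the intersticial zones of the complement of a lamination}. Concretely, the complement of the region where the dominated splitting extends is controlled by a center lamination, and it is the three-dimensional structure of the gaps in that lamination (together with the fact, from \cite{rhrhu2008}, that accessibility classes are manifolds when the center is one-dimensional) that forces the complement to have measure zero. Without this lamination analysis your Step~1 does not go through, and this is precisely the ``delicate argument specific to 3-manifolds'' that the paper flags.
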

The proof of Theorem \ref{teo.jana.partially.hyperbolic.sets} involves delicate arguments that are specific for 3-dimensional manifolds. Indeed, the geometry of the intersticial zones of the complement of a lamination is involved, as well as the fact that the accessibility classes are manifolds, something that, so far, it is only known for partially hyperbolic dynamics with one-dimensional center \cite{rhrhu2008}.\newline\par
The general case involves many possible difficulties. For instance, in higher dimensions, one can have (stable) non-uniform hyperbolic examples that are not hyperbolic nor partially hyperbolic \cite{bonatti_viana2000}. These examples can be even stably ergodic \cite{tahzibi2004}.\par
Let us consider some special cases that could be interesting approaches to Conjecture \ref{conjecture.avila.bochi}, and are still unsolved. As Proposition \ref{proposition.jana.ergodicidad} below shows, under the assumption of ergodicity, the conjecture easily follows:
\begin{proposition}\label{proposition.jana.ergodicidad}
For a generic ergodic diffeomorphisms $f$ in $\diff^1_m(M)$, either:
\begin{enumerate}
\item all Lyapunov exponents vanish almost everywhere, or
\item the Oseledets splitting extends to a global dominated splitting and $\nuh(f)\circeq M$.
\end{enumerate}
\end{proposition}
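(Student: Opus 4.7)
I would combine Bochi--Viana's decomposition (Theorem \ref{teo.bochi.viana}) with ergodicity, extend the resulting dominated splitting by continuity to all of $M$, and then use Avila--Bochi (Theorem \ref{teo.avila.bochi}) together with ergodicity to control the Pesin region.

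First I would take $f$ in the intersection of the residual sets from Theorems \ref{teo.bochi.viana} and \ref{teo.avila.bochi}, further restricted to the ergodic diffeomorphisms. Let $Z$ denote the $f$-invariant set of regular points all of whose Lyapunov exponents vanish, and for each $n \geq 1$ let $D_n$ be the $f$-invariant set of points along whose orbit the Oseledets splitting is $n$-dominated. Theorem \ref{teo.bochi.viana} gives $M = Z \cup \bigcup_{n \geq 1} D_n$ modulo zero. Since $f$ is ergodic, every invariant set has measure $0$ or $1$, so by countable additivity either $m(Z) = 1$, giving alternative (1), or there exists $n_0$ with $m(D_{n_0}) = 1$.

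Assume the latter. Ergodicity also makes the number $k$ of distinct Lyapunov exponents and their multiplicities $d_1,\dots,d_k$ almost everywhere constant, so the finest Oseledets splitting $T_xM = E_1(x) \oplus \dots \oplus E_k(x)$ is an $n_0$-dominated splitting of constant dimensions on the full-measure invariant set $D_{n_0}$. Because $n_0$-domination is a closed condition and the subbundles of an $n_0$-dominated splitting depend continuously on the base point, the splitting extends continuously to $\overline{D_{n_0}}$. The measure $m$ has full support and $m(D_{n_0}) = 1$, so $D_{n_0}$ is dense in $M$; hence $\overline{D_{n_0}} = M$ and we obtain a global $n_0$-dominated splitting of $TM$.

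It remains to establish $\nuh(f) \circeq M$. Since $\nuh(f)$ is $f$-invariant, ergodicity gives $m(\nuh(f)) \in \{0,1\}$, and $m(\nuh(f)) = 1$ places us in alternative (2). The main obstacle is to rule out $m(\nuh(f)) = 0$ in conjunction with $m(Z) = 0$. If this happened, a.e.\ $x$ would have at least one zero and at least one non-zero Lyapunov exponent, and the global dominated splitting above would have a non-trivial center bundle $E_j$ with $\hat\lambda_j = 0$. This is precisely the coexistence obstruction discussed in Subsection \ref{subsection.conservative}. The route I would pursue is to produce, via a Bochi--Viana-type perturbation \cite{bochi_viana2005} supported in a Rokhlin tower inside $D_{n_0}$, a $C^1$-small modification that mixes directions from $E_j$ into an adjacent bundle, contradicting the constancy of the finest splitting inherited from ergodicity or producing a jump in some integrated exponent $\le_p$ at an index $p$ strictly inside $E_j$. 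Realizing this perturbation in arbitrary dimension, while preserving the conservative structure, is the hardest step and is where ergodicity must be invoked in an essential way.
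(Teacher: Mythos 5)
Your first two steps coincide with the paper's argument: ergodicity reduces the Bochi--Viana decomposition $M = Z \cup \bigcup_{n\geq 1} D_n$ to either $m(Z)=1$ or $m(D_{n_0})=1$ for some $n_0$, and in the latter case the full-measure (hence dense, since $m$ has full support) invariant set $D_{n_0}$ carries an $n_0$-dominated finest Oseledets splitting of constant index, which extends to all of $M$ by closedness of the domination condition. That part is fine.

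The gap is in your treatment of $\nuh(f)\circeq M$, and the mechanism you propose cannot work. Once the global dominated splitting is in place, a Bochi--Viana-type tower perturbation that ``mixes directions from $E_j$ into an adjacent bundle'' is precisely what domination forbids: those perturbations produce a drop in some $\le_p$ only at indices where the splitting fails to be dominated, and domination is a $C^1$-robust property, so no small perturbation can blend the bundles or alter the constancy of the finest splitting. The paper's route is different and is the actual content of the proposition: by ergodicity and Birkhoff, the (unique, since the $\hat\lambda_i$ are distinct) possibly-zero exponent satisfies $k\hat\lambda_i=\int_M\log J^if(x)\,dm$, where $J^if$ is the Jacobian of $f$ restricted to $E_i$ and $k$ its multiplicity; because $E_i$ is a bundle of a dominated splitting it varies continuously with $f$, so this integral is a continuous function of $f$. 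Baraviera--Bonatti \cite{baraviera_bonatti2003} then provide an arbitrarily $C^1$-small conservative perturbation making this integral nonzero \emph{without destroying the domination}, and continuity of the integral upgrades this to an open-and-dense, hence generic, condition among the ergodic diffeomorphisms under consideration. You should replace your Rokhlin-tower mixing step by this ``removing zero Lyapunov exponents'' argument.
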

\begin{proof}
If there is a set $Z$ over which all Lyapunov exponents vanish, then by ergodicity, either $m(Z)=1$ or $m(Z)=0$. In the first case, the conjecture is proved, so let us assume $m(Z)=0$. Then \cite{bochi_viana2005} implies that the Oseledets splitting over almost every orbit is dominated. Since almost every orbit is dense, the finest Oseledets splitting extends to a globally dominated splitting. Note that, since $f$ is ergodic, we have that $m$-almost every $x$: 
$$k\hat\lambda_i(x)=\int_M\log J^if(x)\,dm$$
where $J^if(x)$ is the Jacobian corresponding to $f|E_i$, and $k$ is the multiplicity of $\hat\lambda_i(x)$. Also, the amount in the right term is continuous with respect to $f$. Hence, if $\nuh(f)\circeq M$, then for a generic ergodic diffeomorphism we have the claim. Otherwise, there is one (and only one, due to domination) $\hat\lambda_i(x)=0$. Now Baraviera and Bonatti \cite{baraviera_bonatti2003} state that a little perturbation makes $\int_M\log J^if(x)\,dm\ne 0$. Proposition \ref{proposition.jana.ergodicidad} is then proved. 
\end{proof}
It makes sense to talk about generic ergodic diffeomorphism, since ergodic diffeomorphisms are a $G_\delta$ set in $\diff^1_m(M)$, see for instance \cite{oxtoby_ulam1941}. However, we do not know if this $G_\delta$ set is dense. \par
As we have mentioned in the introduction, it is not even known if {\em weak ergodicity} (Definition \ref{def.weak.ergodicity}) is generic among $\diff^1_m(M)$, though it seems easier to satisfy this hypothesis. Here the technique in \cite{baraviera_bonatti2003} cannot be immediately applied. In any case, we have the following:
\begin{proposition}\label{proposition.weak.ergodic}
For a generic weakly ergodic diffeomorphism $f$ in $\diff^1_m(M)$, one of the following holds. Either:
\begin{enumerate}
\item all Lyapunov exponents vanish almost everywhere, or
\item the finest Oseledets splitting extends to a global dominated splitting.
\end{enumerate}
\end{proposition}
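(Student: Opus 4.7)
The plan is to invoke Theorem \ref{teo.bochi.viana} to produce a positive-measure $f$-invariant set carrying a constant-dimension dominated Oseledets splitting, use the density of positive-measure invariant sets built into Definition \ref{def.weak.ergodicity} to make this set dense in $M$, and then extend the splitting to the whole manifold by the standard closure argument for dominated splittings. The ambient residual set is the intersection of the Bochi--Viana residual set of \cite{bochi_viana2005} with the weakly ergodic $G_\delta$ from Proposition \ref{proposition.jana.weak.ergodicity.gdelta}; a routine Baire-category check (using that the Bochi--Viana set is obtained as a countable intersection of open-dense perturbative conditions) shows that this intersection is residual among weakly ergodic diffeomorphisms.

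Fix such an $f$. Theorem \ref{teo.bochi.viana} gives a decomposition $M \circeq Z \cup \bigcup_{n\geq 1} D_n$, where $Z$ is the $f$-invariant set of points with all Lyapunov exponents zero and $D_n$ is the $f$-invariant set of points whose orbit carries an $n$-dominated finest Oseledets splitting. If $m(Z) = 1$, alternative (1) holds and we are done. Otherwise some $D_n$ has positive measure. Since the number $k(x)$ of distinct Lyapunov exponents and the multiplicity vector $(d_1(x), \dots, d_{k(x)}(x))$ are $f$-invariant measurable functions, we may subdivide $D_n$ into countably many $f$-invariant pieces on which these data are constant; extract a piece $D^* \subset D_n$ with $m(D^*) > 0$ on which the finest Oseledets splitting $T_xM = E_1(x) \oplus \dots \oplus E_k(x)$ is $n$-dominated with constant dimensions $(d_1, \dots, d_k)$. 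By weak ergodicity, $D^*$ is dense in $M$.

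Now apply the standard closure argument for dominated splittings with prescribed constant dimensions. For $x \in M$ and any sequence $x_j \to x$ with $x_j \in D^*$, the subspaces $E_i(x_j)$ sit in the compact Grassmannian of $d_i$-planes; any convergent subsequence produces a candidate splitting $\tilde E_1(x) \oplus \dots \oplus \tilde E_k(x)$ that inherits the $n$-domination inequality (\ref{equation dominated splitting}) at $x$ by passage to the limit in the continuous quantities $\|Df^n(y)|_{E_i(y)}\|$. Uniqueness of dominated splittings with prescribed dimensions forces the limit to be independent of the chosen subsequence, so the $\tilde E_i$ define continuous, $Df$-invariant bundles on all of $\overline{D^*} = M$ that assemble into a global $n$-dominated splitting extending the finest Oseledets splitting on the positive-measure set $D^*$. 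This is alternative (2).

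The argument has no single hard geometric step; it is a bookkeeping exercise sitting on top of Theorem \ref{teo.bochi.viana} together with the standard extension theory of dominated splittings. The most delicate point is the Baire-category verification that the ambient residual set is residual \emph{within} the weakly ergodic diffeomorphisms, which is where restriction-of-residuality could fail if the Bochi--Viana conditions were not manifestly perturbative inside weak ergodicity. Note also that, unlike Proposition \ref{proposition.jana.ergodicidad}, one cannot upgrade alternative (2) to $\nuh(f)\circeq M$ plus ergodicity: without full ergodicity, a positive-measure zero-exponent set $Z$ and a non-trivial center bundle of the extended dominated splitting can in principle coexist, which is precisely why Conjecture \ref{conjecture.avila.bochi} does not follow from weak ergodicity alone.
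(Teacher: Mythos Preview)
Your proof is correct and follows the paper's intended line: the paper states Proposition~\ref{proposition.weak.ergodic} without an explicit proof, trusting the reader to carry over the first half of the proof of Proposition~\ref{proposition.jana.ergodicidad} (invoke \cite{bochi_viana2005} to get $M\circeq Z\cup\bigcup_n D_n$, note that weak ergodicity makes any positive-measure invariant piece dense, and extend the dominated splitting to its closure $\overline{D_n}=M$). You have written out exactly this argument, with the extra care of passing to a sub-piece with constant dimension data and spelling out the Grassmannian closure step; your closing caveat about not recovering $\nuh(f)\circeq M$ matches the paper's remark that the Baraviera--Bonatti perturbation \cite{baraviera_bonatti2003} cannot be applied here.
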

There is a specific case where the result is still unsolved and satisfies hypotheses above: Indeed, as we stated in Proposition \ref{proposition.weak.ergodicity.generic}, weak ergodicity is generic among partially hyperbolic diffeomorphisms. This is a consequence of the fact proved by Brin that accessibility implies weak ergodicity for conservative smooth diffeomorphisms \cite{brin1975}, and that accessibility contains an open and dense set of conservative diffeomorphisms \cite{dolgopyat_wilkinson2003}, as it has been shown by Dolgopyat and Wilkinson. However, even in this simpler case, the following question is still unsolved:
\begin{question}\label{question.partially.hyperbolic}
Are non-uniform hyperbolicity and ergodicity generic among partially hyperbolic diffeomorphisms in $\diff^1_m(M)$?
\end{question}
Note that solving Conjecture \ref{conjecture.pugh.shub} below would give a positive answer to Question \ref{question.partially.hyperbolic}:
\begin{question}[Pugh-Shub \cite{pugh_shub1996,pugh_shub2000}]\label{conjecture.pugh.shub} Stable ergodicity is dense in $\diff^r_m(M)$.
\end{question}
A diffeomorphism $f\in\diff^1_m(M)$ is {\em stably ergodic} if all $C^1$-nearby diffeomorphisms $g\in\diff^{1+\alpha}$ are ergodic. This conjecture was proven true for $r=\infty$ in the case that the center bundle is one-dimensional, by F. Rodriguez Hertz, J. Rodriguez Hertz and Ures in \cite{rhrhu2008}. It is also known for $r=1$ and two-dimensional center bundle, as it has been shown by F. Rodriguez Hertz, J. Rodriguez Hertz, Tahzibi and Ures \cite{rhrhtu2009}. \par
In \cite{rhrhtu2009} we arrive to a situation as described in Proposition \ref{proposition.weak.ergodic}. Now, a perturbation like in \cite{baraviera_bonatti2003} not necessarily makes $m(\nuh(f))>0$, since the diffeomorphism is not known to be ergodic. However, using \cite{rhrhtu2010}, one can ``blend" the zone where there is at least one positive center Lyapunov exponent, with the zone where there is at least one negative center Lyapunov exponent. We believe that a similar procedure could be used to prove Conjecture \ref{conjecture.avila.bochi} under the hypothesis of generic weak ergodicity, what would solve, for instance, Question \ref{question.partially.hyperbolic}.

\end{document}